\newtheorem{thm}{Theorem}
\newtheorem{prp}{Proposition}
\newtheorem{claim}{Claim}
\title{\textbf{Maximal operators on hyperbolic triangles}}
\author{Romain Branchereau \& Samuel Bronstein \& Anthony Gauvan}
\begin{document}

\maketitle

\begin{abstract}
We characterize the boundedness properties on the spaces $L^p( \mathbb{H}^2)$ of the maximal operator $M_\mathcal{B}$ where $\mathcal{B}$ is an arbitrary family of hyperbolic triangles stable by isometries.
\end{abstract}

\section{Introduction}

The theory of maximal operators have a long history in analysis and those objects are still intensively studied today: in a measured metric space $(X,\mu,d)$, it is common to consider the \textit{centered} \textit{Hardy-Littlewood} maximal operator $M^c$ defined as $$M^c f(x) := \sup_{r > 0}  \frac{1}{\mu\left( B(x,r) \right)} \int_{B(x,r)} |f|$$ or its non \textit{non-centered} version $M$ defined as $$Mf(x) := \sup_{x \in B \in \mathcal{Q}}  \frac{1}{\mu\left( B \right)} \int_{B} |f|.$$ Here, we have denoted by $\mathcal{Q}$ the family containing all the balls of the space $X$.

In the Euclidean space $\mathbb{R}^n$ endowed with the Lebesgue measure, the operator $M$ and $M^c$ are pointwise comparable (\textit{i.e.} for any function $f$ and $x \in \mathbb{R}^n$, we have $ M^c f(x) \simeq_n Mf(x)$) and both operators have weak-type $(1,1)$. 

\begin{thm}[Maximal Theorem in $\mathbb{R}^n$]\label{Maximal}
For any $f$ and $t > 0$, one has the following estimate $$\left| \left\{ Mf > t \right\} \right|_n \lesssim_n \int_{\mathbb{R}^n}\frac{|f|}{t}.$$
\end{thm}

It is well known that the proof of Theorem \ref{Maximal} relies on the classic Vitali's covering Theorem. Also by interpolation arguments with $p = \infty$, we immediately obtain that the operators $M$ and $M^c$ are bounded from $L^p$ to $L^p$ for any $p > 1$. Such an estimate coupled with classic arguments of measure theory yield the following Theorem of differentiation.

\begin{thm}[Lebesgue's Theorem of differentiation]
For any locally integrable function $f$, we have almost everywhere the following identity $$ f(x) = \lim_{r \rightarrow 0} A_r f(x).$$ Here, $A_r f(x)$ stands for the average of $f$ on a ball (or a cube) centered at the point $x$ or radius $r$.
\end{thm}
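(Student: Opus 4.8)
The plan is to deduce the differentiation theorem from the weak-type $(1,1)$ bound of Theorem \ref{Maximal} via the standard density argument. First I would reduce to the case $f \in L^1(\mathbb{R}^n)$: since the conclusion is both local (it concerns the behaviour of $A_r f(x)$ only as $r \to 0$) and pointwise, for each fixed $N$ I may replace $f$ by its product with the indicator of the ball $B(0,N)$ without affecting the limit at interior points, so it suffices to treat globally integrable $f$. I would then introduce the maximal oscillation
$$ \Omega f(x) := \limsup_{r \to 0} \left| A_r f(x) - f(x) \right|, $$
and observe that the theorem is equivalent to the assertion that $\Omega f = 0$ almost everywhere.

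The argument rests on three structural facts. First, $\Omega$ is subadditive, $\Omega(f+g) \le \Omega f + \Omega g$. Second, for continuous $g$ one has $\Omega g \equiv 0$, since $A_r g(x) \to g(x)$ at every point by the continuity of $g$. Third, there is the crude pointwise bound $\Omega h(x) \le M h(x) + |h(x)|$, which follows from $|A_r h(x)| \le M h(x)$ (the average over the centered ball is controlled by the non-centered maximal operator) together with $\limsup_{r\to 0}|A_r h(x) - h(x)| \le \limsup_{r\to 0}|A_r h(x)| + |h(x)|$.

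The heart of the proof is the combination of these facts with the maximal inequality. Given $\varepsilon > 0$, I would invoke the density of continuous compactly supported functions in $L^1$ to write $f = g + h$ with $g$ continuous and $\| h \|_{L^1} < \varepsilon$. By subadditivity and the second fact, $\Omega f \le \Omega h$; hence by the third fact and Chebyshev's inequality together with Theorem \ref{Maximal},
$$ \left| \left\{ \Omega f > t \right\} \right|_n \le \left| \left\{ Mh > t/2 \right\} \right|_n + \left| \left\{ |h| > t/2 \right\} \right|_n \lesssim_n \frac{\| h \|_{L^1}}{t} < \frac{C_n \varepsilon}{t}. $$
Since $\varepsilon > 0$ is arbitrary, this forces $\left| \left\{ \Omega f > t \right\} \right|_n = 0$ for every $t > 0$, and taking a countable union over $t = 1/k$ yields $\Omega f = 0$ almost everywhere, which is the desired conclusion.

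The main obstacle is concentrated in the second ingredient together with the density step: the whole scheme hinges on having a dense class of functions for which the limit is transparent, and on controlling the resulting error uniformly through the maximal function. I expect the only genuinely delicate points to be the localization reduction and the measurability of $\Omega f$ (so that the level sets above are meaningful); the analytic core is the single estimate displayed above, where the weak-type bound of Theorem \ref{Maximal} does all the work.
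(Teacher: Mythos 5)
Your proof is correct, and it is exactly the argument the paper has in mind: the paper gives no details, stating only that the weak-type bound of Theorem \ref{Maximal} ``coupled with classic arguments of measure theory'' yields the differentiation theorem, and your density-plus-oscillation scheme (split $f=g+h$ with $g$ continuous, control $\Omega f \le \Omega h \le Mh + |h|$, then apply the maximal inequality and Chebyshev) is precisely that classic argument. The delicate points you flag (localization, measurability of $\Omega f$) are handled in the standard way and do not affect the validity of the proof.
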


If the centered or non centered version maximal operator behave in the same fashion in the Euclidean space the story is quite different on non-compact symmetric spaces. Precisely, let $\mathbb{G}$ be a non-compact connected semisimple Lie group with finite center, $\mathbb{K}$ a maximal compact subgroup and consider the non-compact symmetric space $$\mathbb{X} = \mathbb{G}/\mathbb{K}$$ equipped with its natural $\mathbb{G}$-invariant measure $\mu$ and distance $d$.
The case of the centered Hardy-Littlewood operator $M^c$ in this setting has been studied in \cite{St} by Stromberg.

\begin{thm}[Stromberg]\label{TStrom}
On $\left( \mathbb{X},\mu, d\right)$, the centered Hardy-Littlewood operator $M^c$ has weak-type $(1,1)$.
\end{thm}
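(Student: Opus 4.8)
The plan is to exploit the homogeneity of $\mathbb{X} = \mathbb{G}/\mathbb{K}$ to write each average as a convolution, and then to split the supremum defining $M^c$ into a \emph{local} part, over radii $r \le 1$, and a \emph{global} part, over radii $r > 1$, since the geometry of $\mathbb{X}$ is radically different at the two scales. Fixing a base point $o = e\mathbb{K}$ and writing $B_r = B(o,r)$, the $\mathbb{G}$-invariance of $\mu$ gives $A_r f = f * \nu_r$, where $\nu_r = \mu(B_r)^{-1}\,\mathbb{1}_{B_r}$ is the normalised uniform probability measure carried by $B_r$; this identifies $M^c$ with a maximal convolution operator, which is the form best suited to harmonic analysis on $\mathbb{G}$.

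For the local operator $M^c_{\le 1} f = \sup_{0 < r \le 1} A_r f$ I would argue exactly as in the Euclidean Theorem~\ref{Maximal}. Because $\mathbb{X}$ is a Riemannian homogeneous space it has bounded geometry, so there is a constant $D$, uniform in the centre, with $\mu(B(x,2r)) \le D\,\mu(B(x,r))$ for all $r \le 1$; that is, $\mu$ is \emph{uniformly doubling at small scales}. This is precisely the hypothesis under which the Vitali $5r$-covering argument behind Theorem~\ref{Maximal} goes through verbatim, yielding $\mu(\{M^c_{\le 1} f > t\}) \le (C/t)\,\|f\|_{L^1}$.

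The global operator $M^c_{>1} f = \sup_{r > 1} A_r f$ carries all the difficulty, and this is where I expect the main obstacle to lie. The source of the trouble is that for $r \ge 1$ one has $\mu(B_r) \asymp e^{L r}$, with $L$ the exponential volume-growth rate of $\mathbb{X}$, so doubling fails catastrophically at large scales and the Vitali machinery is unavailable; the centring of the operator must therefore be used in an essential way. The idea I would pursue is to transfer the problem to the radial variable. Writing points in geodesic polar coordinates about the centre $x$, the volume element is $d\mu = J(s)\,ds\,d\omega$ with $J(s) \asymp e^{L s}$ for $s \ge 1$, so that $A_r f(x)$ equals, up to bounded factors, a one-sided average of the spherical means $F_x(s) = \int_{S} |f(\exp_x s\omega)|\,d\omega$ against the weight $J(s)\,ds$ on $[0,r]$. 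The global bound is thereby modelled on the \emph{centred} one-dimensional Hardy--Littlewood operator on the weighted line $(\mathbb{R}, e^{L s}\,ds)$, which — in sharp contrast with its non-centred version — is of weak type $(1,1)$, the centring exactly compensating the exponential weight.

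The delicate point, and the step I expect to fight hardest with, is making this radial reduction legitimate uniformly over all centres while keeping track of the transversal spherical directions: large centred balls in $\mathbb{X}$ overlap in a tree-like rather than Euclidean fashion, and the weak-type constant must not degenerate as $r \to \infty$. Concretely I would establish a geometric comparison lemma controlling, for $r \ge 1$, the measure of the set of centres $x$ whose ball $B(x,r)$ captures a prescribed fraction of the mass of $f$, and combine it with the one-dimensional estimate and with the trivial bound $M^c_{>1} f \le \|f\|_{L^\infty}$ (each $A_r$ being an average). Assembling the two pieces then gives the weak-type $(1,1)$ estimate for $M^c = \max(M^c_{\le 1}, M^c_{>1})$ on all of $(\mathbb{X},\mu,d)$.
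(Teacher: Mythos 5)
This statement is not actually proved in the paper: it is quoted as Stromberg's theorem and attributed to \cite{St}, the only hint at its proof being the remark that Stromberg uses a local-global strategy to decompose $M^c$. Your splitting into $M^c_{\le 1}$ and $M^c_{>1}$ matches that strategy in spirit, and your treatment of the local part is correct: uniform small-scale doubling on a Riemannian homogeneous space plus the Vitali $5r$-covering argument does give the weak $(1,1)$ bound for $M^c_{\le 1}$.

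The genuine gap is the global part, which is the entire content of the theorem. Your reduction to the one-dimensional centered maximal operator on $(\mathbb{R},e^{Ls}\,ds)$ is not legitimate as stated. For each fixed center $x$, the averages $A_rf(x)$, $r>1$, are averages of the spherical means $g_x(s)=f*\sigma_s(x)$ over intervals $[0,r]$ anchored at $s=0$; the centering of $M^c$ lives in the variable $x$, not in the radial variable $s$, so the weighted centered one-dimensional maximal theorem (true, via Besicovitch) is not the operator that appears. More seriously, even granting a weak-type bound in $s$ for each fixed $x$, this says nothing about the $\mu$-measure in $x$ of the level set $\left\{ M^c_{>1}f > t \right\}$: the functions $g_x$ for different centers are strongly correlated, and one cannot integrate weak-type estimates over the center variable. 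Your heuristic does work when $f$ is essentially a point mass (then the level set is a ball of measure about $1/t$, as the exponential volume growth exactly cancels the exponential normalization), but weak $(1,1)$ estimates do not superpose over a decomposition of $f$ into point masses --- that failure of superposition is precisely what makes the theorem hard and why covering arguments are unavailable. The ``geometric comparison lemma'' you invoke to repair this is announced but never formulated or proved, and the trivial bound $M^c_{>1}f \le \|f\|_{L^\infty}$ gives nothing for $L^1$ data. So the proposal settles the easy half and leaves the hard half --- Stromberg's actual contribution in \cite{St} --- as a plan rather than a proof.
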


In contrast with the Euclidean space, one cannot use covering argument on non-compact symmetric space since the growth of the volume of the balls are exponential. To deal with this difficulty, Stromberg uses a local-global strategy to decompose the operator $M^c$ and to detail its optimal boundedness property. On the other hand, the non-centered operator $M$ has been studied by Ionescu in \cite{IO}.

\begin{thm}[Ionescu]\label{TIo}
On $\left( \mathbb{X},\mu, d\right)$, the non-centered Hardy-Littlewood operator $M$ is bounded from $L^p$ to $L^p$ in the sharp range of exponents $p \in (2,\infty]$.
\end{thm}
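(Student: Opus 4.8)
The plan is to reduce to the core range $2 < p < \infty$, since the case $p = \infty$ is immediate: $Mf$ is an average of $|f|$ and hence $\|Mf\|_\infty \le \|f\|_\infty$. First I would split the operator according to the radius of the balls, writing $M = M_{\mathrm{loc}} + M_{\mathrm{glob}}$, where $M_{\mathrm{loc}}$ is the supremum restricted to balls of radius at most $1$ and $M_{\mathrm{glob}}$ to balls of radius larger than $1$. On small scales the space $(\mathbb{X}, \mu, d)$ has bounded geometry and looks uniformly Euclidean, so a standard Vitali covering argument yields that $M_{\mathrm{loc}}$ has weak-type $(1,1)$ and, by interpolation with $L^\infty$, is bounded on $L^p$ for every $p > 1$. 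This part is routine and is not where the threshold $p = 2$ comes from; the entire difficulty is concentrated in $M_{\mathrm{glob}}$.

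For the global part I would exploit the homogeneity of $\mu$. Writing $V(r) := \mu(B(x,r))$, which by $\mathbb{G}$-invariance is independent of $x$, the average of $|f|$ over a ball is a convolution on $\mathbb{X}$ against the normalized bi-$\mathbb{K}$-invariant kernel $\sigma_r := V(r)^{-1}\mathbf{1}_{B(o,r)}$. The crucial obstruction is that $M$ is \emph{non-centered}: given $x$, one must take the supremum over all balls $B(y,r)$ containing $x$, not merely over balls centered at $x$. The naive bound $\int_{B(y,r)}|f| \le \int_{B(x,2r)}|f|$ is fatal here, since $V(2r)/V(r)$ grows like $e^{2\rho r}$ (with $e^{2\rho r}$ the leading exponential growth of $V(r)$), and this exponential factor cannot be absorbed. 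Thus the off-center balls must be handled directly. The key analytic input I would use is the sharp $L^2$ bound for bi-$\mathbb{K}$-invariant convolution: for $\kappa \ge 0$ the operator $f \mapsto f * \kappa$ has norm on $L^2(\mathbb{X})$ equal to $\int_{\mathbb{G}}\kappa(g)\,\phi_0(g)\,dg$, where $\phi_0$ is the elementary spherical function, because $|\phi_\lambda| \le \phi_0$ for real spectral parameters and the spectrum of $\kappa *$ is $\{\widehat\kappa(\lambda)\}$. This is the Kunze--Stein mechanism in its spectral form.

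Decomposing $M_{\mathrm{glob}}$ along unit scales $k \le r < k+1$, so that $V(r)$ stays comparable to $V(k)$, the centered piece at scale $k$ is dominated by convolution against $\tau_k \asymp V(k)^{-1}\mathbf{1}_{B(o,k+1)}$, whose $L^2$ norm I would compute from the estimate $\phi_0(g) \asymp V(d(o,g))^{-1/2}(1+d(o,g))^{a}$. Using polar coordinates one finds $\int_{\mathbb{G}}\tau_k\,\phi_0 \asymp V(k)^{-1}\int_0^{k+1} V(r)^{-1/2}(1+r)^{a}\,dV(r) \asymp (1+k)^{b}\,e^{-\rho k}$, which is summable in $k$; here the decay $\phi_0 \sim V^{-1/2}$ and the growth of $V$ exactly balance, leaving only the half-power gain $V(k)^{-1/2}$. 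Interpolating these scale-$k$ bounds against the trivial $L^\infty$ bound, and then summing a geometric series, controls the \emph{centered} global operator on $L^p$ for $p \ge 2$. To recover the full non-centered supremum I would combine this with a covering/overlap argument adapted to the exponential geometry: the off-center balls at a fixed scale have controlled overlap when measured in $L^p$, but only the half-power gain above is available, so the overlap is summable precisely when $p > 2$. At $p = 2$ the gain is exhausted and the argument (and in fact the operator) breaks down, which matches the sharpness of the range.

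The hard part will be this last step: converting the fixed-kernel $L^2$ estimate into a genuine maximal inequality for the non-centered supremum at exponentially large scales. This is exactly the point where a Vitali-type covering theorem fails on $(\mathbb{X}, \mu, d)$, because doubling a ball multiplies its measure by an exponential factor, and one must instead prove a substitute covering lemma whose overlap is controlled by the spherical function $\phi_0$. It is this covering lemma, together with the identification $\phi_0 \sim V^{-1/2}$, that produces the sharp cutoff at $p = 2$ and forces the open condition $p > 2$ rather than $p \ge 2$.
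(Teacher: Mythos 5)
First, a point of context: the paper does not prove this statement at all --- it is Ionescu's theorem, cited from \cite{IO}, and the paper's only indication of its proof is the remark that it ``relies on the relation between the boundedness property of maximal operators and the covering property of the geometric family,'' i.e.\ the Cordoba--Fefferman property $V_q$ stated immediately afterwards. So your attempt must be measured against Ionescu's actual argument, which the paper points to but does not reproduce.

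Your outline assembles the correct ingredients: the local/global splitting, the fact that the local part is routine, the observation that the naive non-centered-to-centered reduction $\int_{B(y,r)}|f| \le \int_{B(x,2r)}|f|$ is fatal because $V(2r)/V(r)$ is exponentially large, the spherical-function decay $\phi_0 \asymp V^{-1/2}(1+r)^a$, and the resulting summable scale-by-scale bound for the \emph{centered} global operator on $L^2$. All of that is consistent with the known theory (Clerc--Stein, Stromberg). But the proposal has a genuine gap, and you name it yourself: the step ``converting the fixed-kernel $L^2$ estimate into a genuine maximal inequality for the non-centered supremum,'' which you defer as ``the hard part'' and for which you only postulate ``a substitute covering lemma whose overlap is controlled by $\phi_0$.'' That covering lemma is not a technical afterthought; it is essentially the entire mathematical content of Ionescu's paper (whose very title is ``A maximal operator and a covering lemma on non-compact symmetric spaces''). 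Ionescu proves, via a ball-selection algorithm together with quantitative estimates on the measure of intersections of off-center balls at exponentially large scales, that the family of balls of radius $\ge 1$ satisfies the Cordoba--Fefferman property $V_q$ for every $q < 2$; the weak-type $(p,p)$ bound for $p > 2$ then follows from the equivalence stated in the paper (with $\frac{1}{p}+\frac{1}{q}=1$), and strong boundedness follows by interpolation. Your proposal asserts the conclusion of that lemma (``the off-center balls at a fixed scale have controlled overlap when measured in $L^p$ \dots summable precisely when $p>2$'') without any proof, so the threshold $p>2$ is never actually derived --- it is assumed. Likewise, the sharpness half of the statement (unboundedness for $p \le 2$) is asserted (``and in fact the operator breaks down'') but no counterexample is constructed. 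As it stands the proposal is a correct roadmap to Ionescu's theorem, not a proof of it.
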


The proof of Ionescu relies on the relation between the boundedness property of maximal operator and the covering property of the geometric family that defines it. Precisely, in a measured metric space $(X,\mu,d)$, given an arbitrary family $\mathcal{B}$ of measurable sets of finite measure (the \textit{geometric data}) one can define the maximal operator $M_\mathcal{B}$ associated as $$M_\mathcal{B}f(x) := \sup_{x \in R \in \mathcal{B}}  \frac{1}{\mu\left( R \right)} \int_R |f|.$$ Here, we have implicitly supposed that we have $$X = \bigcup_{R \in \mathcal{B}} R.$$ In \cite{CF}, Cordoba and Fefferman say that such a family $\mathcal{B}$ satisfies the covering property $V_q$ for $q \in (1,\infty)$ if given any finite family $\left\{ R_i : i \in I \right\}$ included in $\mathcal{B}$, there exists a subfamily $J \subset I$ such that we have $$ \mu\left( \bigcup_{i \in I} R_i \right) \lesssim \mu\left( \bigcup_{j \in J} R_j \right)$$ and also $$\left\|\sum_{j \in J} \mathbb{1}_{R_j} \right\|_{L^p(\mu)} \lesssim \mu\left( \bigcup_{j \in J} R_j \right)^\frac{1}{q}.$$ They proved then the following.

\begin{thm}[Cordoba and R. Fefferman]
The family $\mathcal{B}$ satisfies the covering property $V_q$ if and only if the associated maximal operator $M_\mathcal{B}$ has weak-type $(p,p)$ \textit{i.e.} if for any $f$ and $t>0$, one has $$ \mu\left( \left\{ M_\mathcal{B}f > t \right\} \right)^{\frac{1}{p}} \lesssim \frac{\|f\|_{L^p(\mu)}}{t}.$$ Here we have supposed that $\frac{1}{p} + \frac{1}{q}  = 1$. 
\end{thm}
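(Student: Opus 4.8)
The plan is to establish the two implications of the equivalence separately; throughout, $p$ and $q$ are conjugate, $\frac{1}{p}+\frac{1}{q}=1$. The direction $V_q \Rightarrow$ weak-type $(p,p)$ is the routine one. I would fix $f\in L^p(\mu)$ and $t>0$, and by inner regularity reduce to bounding $\mu(K)$ for a compact set $K\subseteq\{M_{\mathcal{B}}f>t\}$. Every $x\in K$ belongs to some $R\in\mathcal{B}$ with $\int_R|f|>t\,\mu(R)$, and compactness extracts a finite subfamily $\{R_i:i\in I\}$ covering $K$. Applying $V_q$ to this finite family yields $J\subseteq I$, and I would then run the chain
$$\mu(K)\le\mu\Big(\bigcup_{i\in I}R_i\Big)\lesssim\mu\Big(\bigcup_{j\in J}R_j\Big)\le\sum_{j\in J}\mu(R_j)<\frac{1}{t}\int|f|\sum_{j\in J}\mathbb{1}_{R_j}.$$
Hölder's inequality with exponents $p$ and $q$ bounds the last integral by $\frac{1}{t}\|f\|_{L^p}\big\|\sum_{j\in J}\mathbb{1}_{R_j}\big\|_{L^q}$, and the overlap estimate of $V_q$ turns this into $\frac{1}{t}\|f\|_{L^p}\,\mu(\bigcup_{j\in J}R_j)^{1/q}$; dividing and using $1-\frac{1}{q}=\frac{1}{p}$ gives $\mu(K)^{1/p}\lesssim\|f\|_{L^p}/t$, and taking the supremum over $K$ finishes this direction.

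For the converse I would assume $M_{\mathcal{B}}$ is weak-type $(p,p)$ with constant $C$ and, given a finite family $\{R_i:i\in I\}$, construct $J$ by a greedy selection: order the sets by decreasing measure and admit a set into $J$ precisely when its overlap with the union of the already-selected sets is at most $\tfrac12$ of its own measure. For the density estimate I would test the weak-type inequality against $g=\mathbb{1}_E$ with $E=\bigcup_{j\in J}R_j$: on $E$ one has $M_{\mathcal{B}}g\ge 1$, while every discarded set $R_i$ is more than half covered by selected sets of larger measure, hence by $E$, so $M_{\mathcal{B}}g>\tfrac12$ on all of $R_i$. Thus $\bigcup_{i\in I}R_i\subseteq\{M_{\mathcal{B}}g>\tfrac12\}$, and weak-type $(p,p)$ gives $\mu(\bigcup_{i\in I}R_i)\le(2C)^p\mu(E)=(2C)^p\mu(\bigcup_{j\in J}R_j)$, which is the first required estimate.

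The overlap estimate $\big\|\sum_{j\in J}\mathbb{1}_{R_j}\big\|_{L^q}\lesssim\mu(\bigcup_{j\in J}R_j)^{1/q}$ is the heart of the matter, and I expect it to be the main obstacle. By duality I would test against an arbitrary $g\ge0$ with $\|g\|_{L^p}\le1$ and estimate $\sum_{j\in J}\int_{R_j}g$. Writing the exposed parts $E_j=R_j\setminus\bigcup_{l<j}R_l$, the selection rule guarantees $\mu(E_j)\ge\tfrac12\mu(R_j)$ and the $E_j$ are disjoint; since the average of $g$ over $R_j$ is at most $M_{\mathcal{B}}g$ at every point of $E_j\subseteq R_j$, one gets $\int_{R_j}g\le 2\int_{E_j}M_{\mathcal{B}}g$, and summing over $J$ yields $\sum_{j\in J}\int_{R_j}g\le 2\int_E M_{\mathcal{B}}g$. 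The delicate point is that only the \emph{weak}-type bound is available, so I cannot invoke a strong $L^p$ bound on $M_{\mathcal{B}}g$; instead I would write $\int_E M_{\mathcal{B}}g=\int_0^\infty\mu(E\cap\{M_{\mathcal{B}}g>\lambda\})\,d\lambda$, split at a threshold $\Lambda$ using $\mu(E)$ below $\Lambda$ and the weak-type bound $\mu(\{M_{\mathcal{B}}g>\lambda\})\le(C/\lambda)^p$ above it, and optimize at $\Lambda=C\mu(E)^{-1/p}$ to obtain $\int_E M_{\mathcal{B}}g\lesssim\mu(E)^{1/q}$. Taking the supremum over $g$ gives the overlap bound and, together with the density estimate, completes the proof. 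The only genuine difficulty is this truncation argument, together with checking that the greedy selection interacts correctly with $M_{\mathcal{B}}$ through the exposed sets $E_j$.
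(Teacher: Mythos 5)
Your proof is correct, but note that the paper itself contains no proof of this statement: it is quoted as a known theorem with a citation to Cordoba--Fefferman, so there is nothing internal to compare against. What you have written is essentially the classical Cordoba--Fefferman argument: for $V_q \Rightarrow$ weak $(p,p)$, the finite subcover plus H\"older plus the overlap bound, then dividing by $\mu\bigl(\bigcup_{j\in J} R_j\bigr)^{1/q}$; for the converse, greedy selection with the half-overlap rule, testing the weak-type inequality against $\mathbb{1}_E$ for the density estimate, and the duality--layer-cake--truncation argument for the overlap estimate. All the steps check out: the exposed sets $E_j$ are disjoint with $\mu(E_j)\geq \tfrac12 \mu(R_j)$, the discarded sets lie in $\{M_\mathcal{B}\mathbb{1}_E > \tfrac12\}$, and the split of $\int_E M_\mathcal{B}g$ at $\Lambda = C\mu(E)^{-1/p}$ gives $\lesssim \mu(E)^{1/q}$ precisely because $p>1$ makes $\int_\Lambda^\infty \lambda^{-p}\,d\lambda$ converge, which is guaranteed by the hypothesis $q\in(1,\infty)$.

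Two small points worth flagging. First, your compactness step (extracting a finite subfamily covering $K$) implicitly requires the sets of $\mathcal{B}$ to be open, or at least to admit open approximations; this is the standard tacit assumption in this theorem (in the original paper the sets are open rectangles) and is harmless here, but you should state it. Second, your proof silently corrects what appears to be a typo in the paper's definition of $V_q$: the paper writes the overlap bound in the $L^p(\mu)$ norm, but for the H\"older pairing against $\|f\|_{L^p(\mu)}$ to close (and to match the original reference), it must be the $L^q(\mu)$ norm, which is what you use in both directions.
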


Hence, in regards of Theorems \ref{TStrom} and \ref{TIo}, one can see that the \textit{geometry} deeply influence the behavior of $M^c$ or $M$. Let us consider a slightly different problematic: in a fixed measured metric space $(X,\mu,d)$ and with this general definition of a maximal operator $M_\mathcal{B}$, one can wonder what happens when the family $\mathcal{B}$ is composed of more complex geometric objects than balls ? With appropriate geometric restrictions on the family $\mathcal{B}$, this problematic can unravel deep phenomena of the underlying space $X$ and the interaction of the elements of the family $\mathcal{B}$.

Let us give an example in the Euclidean plane $\mathbb{R}^2$: given an arbitrary set of directions $\Omega \subset \mathbb{S}^1$, consider the \textit{directional} family $\mathcal{R}_\Omega$ composed of every rectangles whose longest side makes an angle $\omega \in \Omega$ with the $Ox$-axis and denote by $M_\Omega$ the maximal operator associated to this family. Those \textit{directional} maximal operators have been intensively studied in the past decades (see for examples \cite{ALFONSECA}, \cite{BATEMANKATZ} and \cite{NSW}) and it was only after a long serie of different works that Bateman classified in \cite{BATEMAN} the behavior of directional maximal operators on the $L^p$ spaces for $1 < p < \infty$. We give some examples before stating Bateman's Theorem: \begin{itemize}
    \item if the set of directions $\Omega$ is finite, then the maximal operator $M_\Omega$ is bounded on $L^p$ for any $p > 1$.
    \item if we consider the set of directions $\Omega_{\text{lac}} = \left\{ \frac{\pi}{2^k} : k \geq 1 \right\}$, then the maximal operator $M_{\Omega_{\text{lac}}}$ is bounded on $L^p$ for any $p > 1$, see \cite{NSW}. The proof relies on Fourier analysis techniques
    \item if we consider the set of directions $\Omega_{\text{lin}} = \left\{ \frac{\pi}{k} : k \geq 1 \right\}$, then the maximal operator $M_{\Omega_{\text{lin}}}$ is not bounded on $L^p$ for any $p < \infty$. The proof relies on the construction of a so-called \textit{Perron's tree} which is a concrete realization of a \textit{Kakeya blow} in the plane.
    \item if we consider a classic ternary Cantor set $\Omega_{\text{Cantor}}$ embedded into $\mathbb{S}^1$, then the maximal operator $M_{\Omega_{\text{Cantor}}}$ is not bounded on $L^p$ for any $p < \infty$, see \cite{BATEMANKATZ}. As for $\Omega_{\text{lin}}$, the proof relies on the realization of a Kakeya blow in the plane but \textit{via} a sophisticated random construction.
\end{itemize} Here, we have used the term Kakeya blow to designate the following \textit{geometric obstruction} of the Euclidean space (we denote by $\mathcal{R}$ the family containing every plane rectangle in $\mathbb{R}^2$).

\begin{thm}[Kakeya blow with $\mathcal{R}$]\label{BLOW}
Given any large constant $A \gg 1$, there exists a finite family of rectangles $$\left\{ R_i : i \in I \right\} \subset {\mathcal{R}}$$ such that we have $$\left| \bigcup_{i \in I} TR_i \right| \geq A\left| \bigcup_{i \in I} R_i \right|.$$ Here, we have denoted by $TR$ the rectangle $R$ translated along its longest side by its own length
\end{thm}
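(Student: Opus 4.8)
The plan is to realize the blow-up through the classical Perron tree (or \emph{sprouting}) construction, exploiting the fact that for any rectangle $R$ the doubled set $R \cup TR$ is simply $R$ stretched by a factor two along its longest side. Concretely, I would fix a reference triangle $\mathcal{T}$ with base on the $Ox$-axis and height $\ell$, and partition its base into $N = 2^n$ equal subintervals; joining each subinterval to the apex produces $N$ thin triangles pointing in $N$ distinct directions $\theta_1, \dots, \theta_N$ spread over a fixed angle. Replacing each thin triangle by a thin rectangle $R_i$ of length $\ell$ along its axis and width $\delta \ll \ell$, I obtain the family $\{R_i\}$; note that $R_i \cup TR_i$ is, up to the width-$\delta$ error, the rectangle of length $2\ell$ with the same foot and direction $\theta_i$.

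The core of the argument is the standard recursive \emph{slide-and-sprout} step: pair adjacent subtriangles, translate them along the base so that their lower portions overlap maximally while their tips separate, and iterate over the $n$ scales. I would carry this out to prove two competing estimates. First, the area estimate $|\bigcup_i R_i| \le \varepsilon$: at each of the $n$ bisection steps the translation forces the lower halves of paired pieces to coincide, so the total area of the slid feet telescopes down to $O(1/n)$ times the area of $\mathcal{T}$, while the sprouted corrections sum as a convergent geometric series; hence the union can be made an arbitrarily small fraction of $|\mathcal{T}|$ by taking $n$ large. Second, the lower bound $|\bigcup_i (R_i \cup TR_i)| \ge c > 0$: the doubled rectangles still point in all $N$ directions and their far ends reach into the upper region at distance up to $2\ell$, where the angular spread forces a total transverse width $\gtrsim \ell$, so a thin directional set of this type cannot be compressed and the union retains area bounded below independently of $n$.

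Combining the two estimates gives
$$\left|\bigcup_i TR_i\right| \ge \left|\bigcup_i (R_i \cup TR_i)\right| - \left|\bigcup_i R_i\right| \ge c - \varepsilon \ge \frac{c}{2},$$
while $|\bigcup_i R_i| \le \varepsilon$ can be taken below $\tfrac{c}{2A}$; this yields $|\bigcup_i TR_i| \ge A\,|\bigcup_i R_i|$ after choosing $n$ large enough. Finally I would record that all constructed objects are genuine rectangles and that the width $\delta$ can be sent to $0$, so that the rectangle approximation of the triangles does not affect either estimate.

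I expect the main obstacle to be the first (upper) estimate: controlling the area of $\bigcup_i R_i$ under the iterated sliding requires delicate bookkeeping of the overlapping feet against the sprouted tips across all $n$ scales, and verifying that the sprouted area indeed sums to a small correction. The lower bound and the passage from triangles to rectangles are, by comparison, routine.
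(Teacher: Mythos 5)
Your overall architecture is the classical Besicovitch--Perron tree construction, which is indeed the route behind this theorem (the paper itself does not prove it, but defers to Fefferman's paper \cite{F}, where exactly this construction is used). However, there is a genuine gap at the step you dismiss as routine: the lower bound $\left|\bigcup_i (R_i \cup TR_i)\right| \geq c$. Your justification --- that the translated rectangles ``point in all $N$ directions'' and that ``the angular spread forces a total transverse width $\gtrsim \ell$, so a thin directional set of this type cannot be compressed'' --- is false as a general principle, and it is refuted by the very construction you are carrying out: the Perron tree itself, i.e.\ your $\bigcup_i R_i$, is a union of thin sets pointing in all $N$ directions whose area you simultaneously claim can be made arbitrarily small. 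If directional spread alone forced a lower bound on area, your upper estimate $\left|\bigcup_i R_i\right| \leq \varepsilon$ could not hold. As written, your two key estimates rest on mutually contradictory principles, so the lower bound has no proof.

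What actually makes the lower bound true is a structural property of the sliding that must be set up and propagated inductively. In the elementary bisection step, when two adjacent triangles sharing an apex are slid toward each other so that their feet overlap, their vertical-angle extensions beyond the apexes automatically separate: after the slide, one extension lies entirely in the half-plane to the left of a vertical line and the other entirely to the right. Iterating this through all $n$ scales, one shows that the extension cones of the $2^n$ slid triangles are pairwise (essentially) disjoint; it is this disjointness, not the mere multiplicity of directions, that forces $\left|\bigcup_i TR_i\right|$ to stay large. The translated rectangles $TR_i$ must then be shown to live, up to a fixed fraction, inside these disjoint cones, which also requires repairing a slip in your set-up: a rectangle of length $\ell$ cannot be inscribed in a triangle of height $\ell$, so one takes $R_i$ of length $(1-s)\ell$ and width $\sim s\,2^{-n}\ell$, checks that the portion of $TR_i$ lying above the apex sits inside the extension cone, and sums the disjoint pieces to obtain a bound $\left|\bigcup_i TR_i\right| \geq c(s) > 0$ independent of $n$. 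In short, the upper estimate you worried about is the standard Perron tree computation, but the lower bound is not routine: it is exactly where the specific orientation of the slides enters, and the heuristic you offer in its place would prove a false statement.
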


This Theorem has several applications in analysis and is related to the traditional Kakeya problem. For example, C. Fefferman used a more refined version of this property in order to disprove the famous Ball multiplier Theorem: see \cite{F} which also contains a proof of Theorem \ref{BLOW}. For an arbitrary set of directions $\Omega \subset \mathbb{S}^1$, let us state Bateman's Theorem which indicates that the correct notion to consider is \textit{finiteness lacunarity}. We invite the reader to look at Bateman's paper \cite{BATEMAN} for a precise definition of this notion.

\begin{thm}[Bateman]\label{T0}
We have the following alternative:
\begin{itemize}
    \item if the set of directions $\Omega$ is finitely lacunary then $M_\Omega$ is bounded on $L^p$ for any $p > 1$.
    \item if the set of directions $\Omega$ is not finitely lacunary then it is possible to make a Kakeya blow with the family $\mathcal{R}_\Omega$ and in particular, $M_\Omega$ is not bounded on $L^p$ for any $p < \infty$.
\end{itemize}
\end{thm}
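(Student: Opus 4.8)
The plan is to prove the two halves of the alternative by independent mechanisms, organising everything around the recursive structure behind finite lacunarity: a finitely lacunary set of order $0$ is finite, and one of order $N$ is a finite union of lacunary sequences whose accumulation points form a finitely lacunary set of order $N-1$. Before anything else I would record the elementary reduction that turns a Kakeya blow into unboundedness, so that the second bullet of the statement collapses to a purely geometric assertion. Suppose $\{R_i : i \in I\} \subset \mathcal{R}_\Omega$ realises $\left| \bigcup_{i} TR_i \right| \geq A \left| \bigcup_{i} R_i \right|$. Set $f = \mathbb{1}_{\bigcup_i R_i}$ and, for each $i$, let $2R_i := R_i \cup TR_i$, which is again a rectangle of direction $\omega_i \in \Omega$ and twice the length, so $2R_i \in \mathcal{R}_\Omega$. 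For $x \in TR_i$ one has $x \in 2R_i$ and $\frac{1}{|2R_i|}\int_{2R_i} f \geq \frac{|R_i|}{|2R_i|} = \frac12$, hence $M_\Omega f \geq \frac12$ on $\bigcup_i TR_i$. A weak $(p,p)$ bound for $M_\Omega$ would then give $A \left|\bigcup_i R_i\right| \leq \left|\{M_\Omega f > \tfrac14\}\right| \lesssim \|f\|_p^p = \left|\bigcup_i R_i\right|$, forcing $A \lesssim 1$; since $A$ is arbitrary this is absurd. Thus unboundedness on every $L^p$, $p<\infty$, follows once arbitrarily large blows are produced inside $\mathcal{R}_\Omega$.

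For the unbounded direction I would exploit non-finite lacunarity to manufacture these blows. The classical construction underlying Theorem \ref{BLOW} (a Perron tree built by translating and overlapping finely shifted rectangles) needs a finite set of directions that is, up to an affine rescaling of the angle, comparable to a uniformly spaced family: it is precisely the presence of many mutually non-lacunary directions at a common scale that fuels the blow. Finite lacunarity bounds the amount of such clustering, so when $\Omega$ fails to be finitely lacunary its defining tree has unbounded depth, and one can extract, for every $N$, a finite subset of $\Omega$ which after rescaling is equivalent to a sublacunary (for instance uniformly distributed) configuration of $N$ directions. Running the Perron tree on each such configuration yields a blow whose ratio tends to infinity with $N$, and the reduction above converts this into the desired unboundedness.

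For the bounded direction I would induct on the order $N$. The base case $N=0$ is a finite set of directions, where $M_\Omega$ is controlled by finitely many one-dimensional Hardy--Littlewood maximal operators and is trivially bounded on every $L^p$, $p>1$; equivalently the family $\mathcal{R}_\Omega$ satisfies the covering property $V_q$ for every $q$ with constants depending only on $\#\Omega$. For the inductive step I would peel off the top-level lacunary sequence, whose accumulation set has order $N-1$, and separate the associated angular scales by an annular/directional Littlewood--Paley decomposition of Nagel--Stein--Wainger / Rubio de Francia type; on each frequency piece the operator is governed by a rescaled copy of an order-$(N-1)$ family, to which the inductive hypothesis applies with uniform constants, and the pieces are recombined using the almost-orthogonality furnished by lacunarity. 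The cleanest packaging is to carry the whole induction through the covering property $V_q$ of Cordoba and Fefferman: one shows finitely lacunary families satisfy $V_q$ for every $q \in (1,\infty)$, with constant depending only on the order and the lacunarity constant, and then concludes weak $(p,p)$ for all $p \in (1,\infty)$ before interpolating with the trivial $L^\infty$ bound to obtain strong boundedness.

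The main obstacle, I expect, is twofold and concentrated at the interface between the arithmetic notion of finite lacunarity and the analytic covering condition $V_q$. On the unbounded side, the delicate point is the extraction lemma: deducing from the failure of the tree condition a concrete sequence of Perron-tree configurations whose directions are confined to $\Omega$, which is exactly where the combinatorial (and in the most degenerate cases random) machinery of Bateman--Katz becomes unavoidable. On the bounded side, the difficulty is to make the inductive constants uniform, so that the potentially many order-$(N-1)$ sub-pieces appearing in the decomposition do not accumulate; establishing the equivalence ``finitely lacunary $\iff$ $V_q$ for all $q$'', with the failure for all $q$ coinciding with the existence of a Kakeya blow, is the crux on which both halves of the dichotomy ultimately rest.
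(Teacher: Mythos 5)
A preliminary remark: the paper does not prove Theorem \ref{T0} at all; it is quoted as a known result of Bateman \cite{BATEMAN}, so there is no internal proof to compare yours against. Judged on its own terms, your proposal opens correctly --- the reduction from a Kakeya blow to unboundedness is sound, since $2R_i := R_i \cup TR_i$ is again a rectangle in the same direction, hence in $\mathcal{R}_\Omega$, and the weak-type contradiction with arbitrary $A$ is standard --- but after that both halves of your argument defer exactly the steps that constitute the theorem, and one of them is asserted in a form that is false. For the negative half, you claim that failure of finite lacunarity lets you extract, for every $N$, a subset of $\Omega$ that after affine rescaling is ``equivalent to a uniformly distributed configuration of $N$ directions,'' on which the classical Perron tree then runs. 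The Cantor set $\Omega_{\text{Cantor}}$, which the paper itself recalls and which is not finitely lacunary, refutes this: its points admit no approximately equally spaced subsets of unbounded cardinality at a common scale (the set has proportional gaps at every scale), and this is precisely why Bateman and Katz \cite{BATEMANKATZ} had to replace the deterministic Perron tree by a random, percolation-on-trees construction, and why Bateman's general case \cite{BATEMAN} needs the full tree-combinatorial machinery. Your closing paragraph concedes that this machinery is ``unavoidable,'' but conceding it means the negative half is not proved: the extraction lemma \emph{is} the theorem, not an obstacle on the way to it.

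The positive half has the same status. Induction on the lacunary order via a directional Littlewood--Paley decomposition and almost-orthogonality is indeed the known route (\cite{NSW} for a single lacunary sequence, \cite{ALFONSECA} for the almost-orthogonality principle that Bateman builds on), but the entire difficulty is the uniformity of constants across the recursion --- an order-$N$ set may contain arbitrarily many order-$(N-1)$ pieces inside each lacunary generation, and showing they can be summed without loss is the content of the cited papers, not a routine step. Moreover, your proposal to route the induction through the Cordoba--Fefferman covering property $V_q$ \cite{CF} sits uneasily with the Fourier-analytic decomposition: Littlewood--Paley arguments produce $L^p$ bounds directly, and converting those into the geometric covering property (or establishing $V_q$ by a purely geometric induction) is itself an unproved claim. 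In short, what you have written is an accurate map of where the proof lives --- the dichotomy ``finitely lacunary $\Leftrightarrow$ no Kakeya blow'' with the two analytic consequences --- but every load-bearing lemma on that map is cited or conjectured rather than established.
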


In this text, we investigate this problematic in the hyperbolic plane $\mathbb{H}^2$ endowed with its natural measure $\mu$ and where $\mathcal{B}$ is a family included in $\mathcal{T}$ - the family containing all the geodesic triangles of $\mathbb{H}^2$ - and where in addition $\mathcal{B}$ is stable by isometries of $\mathbb{H}^2$. Since we are able to completely determine the behavior of the operator $M_\mathcal{B}$, it seems interesting to study less regular geometric families and the following example seems pertinent. We fix a point $\omega \in \partial \mathbb{H}^2$ and we consider the family of curves $\mathcal{G}_\omega$ which contains every geodesics that goes to $\omega$ at infinite and also the family $\mathcal{H}_\omega$ which contains every horocycles tangent to $\omega$: given two  geodesics $d \neq d' \in \mathcal{G}_\omega$ and two horocycles $h \neq h' \in \mathcal{H}_\omega$, we consider the open bounded set $P(d,d',h,h')$ whose border is delimited by the four curves $d,d',h$ and $h'$ and then we define the family $$ \mathcal{P}_\omega = \left\{P(d,d',h,h') : d \neq d' \in \mathcal{G}_\omega, h \neq h' \in \mathcal{H}_\omega  \right\}.$$ What can be said about the maximal operator $$M_{\mathcal{P}_\omega} : L^\infty \rightarrow L^\infty $$ associated to the family $\mathcal{P}_\omega$ ? Can we expect boundedness property analogue to the strong maximal operator ? Thereafter, given an arbitrary family $\Omega \subset \partial \mathbb{H}^2$, one can consider the family $$ \mathcal{P}_\Omega = \bigcup_{\omega \in \Omega} \mathcal{P}_\omega.$$ What can be said about the operator $M_{\mathcal{P}_\Omega}$ associated to this family ? Does its behavior depend on $\Omega$ in the same fashion that in the Euclidean case ? We will return to those questions later.

\section{Results}

As said earlier, we are interested by families composed of geodesic triangles which are invariant under the action of isometries. In the following, we denote by by $G$ the group of isometries of $\mathbb{H}^2$ and by $\mathcal{T}$ the family containing all geodesic triangles in $\mathbb{H}^2$. A family $\mathcal{B}$ included in $\mathcal{T}$ and invariant by $G$ is parameterized by the angles exhibited by the triangles it contains: if we denote by $\boldsymbol{\alpha}_T$ the set of angles of the triangle $T$ and that we let $A_\mathcal{B} :=  \{ \boldsymbol{\alpha}_T : T \in \mathcal{B} \}$ then, because $\mathcal{B}$ is invariant under $G$, we precisely have $$\mathcal{B} =  \{ T \in \mathcal{T} : \boldsymbol{\alpha}_T \in A_\mathcal{B} \}.$$ We consider the simplex $S$ of admissible angles which is defined as $$S := \left\{ \boldsymbol{\alpha} \in \left(\mathbb{R}_+\right)^3 :  s(\boldsymbol{\alpha}) \leq \pi \right\}$$ where $s(\boldsymbol{\alpha}) := \alpha_1 + \alpha_2 + \alpha_3$. We are going to decompose the simplex $S$ in three disjoint parts $S_1, S_2$ and $S_3$ according to the \textit{eccentricity} function $p$ defined as $$p(\boldsymbol{\alpha}) := \alpha_1\alpha_2\alpha_3.$$ Precisely we define:
\begin{itemize}
    \item $S_0 := \{ \boldsymbol{\alpha} \in S : s(\boldsymbol{\alpha}) < \pi, p(\boldsymbol{\alpha}) > 0 \}$
    \item $S_1 := \{ \boldsymbol{\alpha} \in S : s(\boldsymbol{\alpha}) = \pi, p(\boldsymbol{\alpha}) > 0 \}$,
    \item $S_2 := \{ \boldsymbol{\alpha} \in S : s(\boldsymbol{\alpha}) < \pi, p(\boldsymbol{\alpha}) = 0\}$,
    \item and $S_3 := \{ \boldsymbol{\alpha} \in S : s(\boldsymbol{\alpha})= \pi, p(\boldsymbol{\alpha}) = 0\}$
\end{itemize}
The functions $s$ and $p$ have simple geometric interpretations: on one hand for any triangle $T$, one has $$\mu\left(T\right) = \pi - s(\boldsymbol{\alpha}_T)$$ and on the other hand, $p(\boldsymbol{\alpha}_T)$ represents the eccentricity of the triangle $T$, its \textit{thickness}. We can now state our main result.

\begin{thm}\label{T:MAIN}
Let $\mathcal{B}$ be a family of hyperbolic triangles stable by isometries and recall its set of angle is defined as $A := \{ \boldsymbol{\alpha}_T : T \in \mathcal{B} \}$. We have the following alternatives: \begin{itemize}
    \item (I) if $\Bar{A} \subset S_0$, then the operator $M_\mathcal{B}$ is bounded from $L^1$ to $L^1$ \textit{i.e.} there exists a constant $C_\mathcal{B} < \infty$ such that for any function $f$ we have $$\|M_\mathcal{B}f \| \leq C_\mathcal{B}\|f \|_{L^1(\mu)}.$$
    \item (II) if $\Bar{A} \subset S_1$ and $\Bar{A} \cap S_1 \neq  \emptyset$, then the operator $M_\mathcal{B}$ has weak-type $(1,1)$ \textit{i.e.} there exists a constant $C_\mathcal{B} < \infty$ such that for any function $f$ and $t > 0$, the following estimate holds $$\mu\left( \left\{M_\mathcal{B}f > t \right\} \right) \leq C_\mathcal{B}\frac{\|f \|_{L^1(\mu)}}{t}.$$
    \item (III) if $\Bar{A} \cap S_2 \neq  \emptyset$, then for any non zero function $f$, there exists a positive constant $c > 0$ depending on $f$ and $\mathcal{B}$ such that for any $y \in \mathbb{H}^2$, one has $$M_\mathcal{B}f(y) > c.$$
    \item (IV) if $\Bar{A} \cap S_3 \neq  \emptyset$, then for any large constant $C \gg 1$, there exists a bounded set $E$ in $\mathbb{H}^2$ satistying $$ \mu\left( \left\{ M_\mathcal{B}\mathbb{1}_E \geq \frac{1}{4} \right\} \right) \geq C\mu\left(E \right).$$
\end{itemize}
\end{thm}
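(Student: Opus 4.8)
The plan is to convert each hypothesis on $\bar A$ into a uniform geometric statement via two facts: the Gauss--Bonnet identity $\mu(T)=\pi-s(\boldsymbol\alpha_T)$ and the dual hyperbolic law of cosines $\cosh c=(\cos\gamma+\cos\alpha\cos\beta)/(\sin\alpha\sin\beta)$, which shows that a uniform lower bound $\boldsymbol\alpha_T\ge\delta$ on the three angles forces a uniform bound $\operatorname{diam}(T)\le R_0(\delta)$ on the side lengths. Since $\bar A$ is a compact subset of the simplex $S$, in Case (I) the inclusion $\bar A\subset S_0$ gives both $\boldsymbol\alpha_T\ge\delta$ and $s(\boldsymbol\alpha_T)\le\pi-\epsilon$ uniformly, hence $T\subset B(x,R_0)$ and $\mu(T)\ge\epsilon$ for every $T\ni x$ in $\mathcal B$. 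Consequently $M_\mathcal B f(x)\le \epsilon^{-1}\int_{B(x,R_0)}|f|\,d\mu$, and integrating in $x$, applying Fubini, and using that $\mu(B(x,R_0))$ is a finite constant independent of $x$ by homogeneity yields the strong $(1,1)$ estimate; here the exponential volume growth is harmless because only balls of one bounded radius intervene, so no covering lemma is needed. In Case (II) the triangles stay fat but may shrink ($s(\boldsymbol\alpha_T)\to\pi$), so $\mu(T)$ is no longer bounded below; here I would instead prove the pointwise domination $M_\mathcal B f\lesssim_\delta M^c f$, using that a fat triangle of diameter $\rho\le R_0$ through $x$ lies in $B(x,\rho)$ and contains a ball of radius $\kappa(\delta)\rho$, so $\mu(T)\gtrsim\mu(B(x,\rho))$ by the uniform doubling of $\mathbb H^2$ at scales $\le R_0$. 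The weak $(1,1)$ bound is then inherited from Stromberg's Theorem~\ref{TStrom}.

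For Case (III), the hypothesis $\bar A\cap S_2\ne\emptyset$ produces a sequence $\boldsymbol\alpha^{(n)}\in A$ with one angle tending to $0$ and $s(\boldsymbol\alpha^{(n)})\to\beta<\pi$, i.e. triangles $T_n\in\mathcal B$ of bounded area $\mu(T_n)\to\pi-\beta$ but with one vertex escaping to $\partial\mathbb H^2$, so $\operatorname{diam}(T_n)\to\infty$. The geometric core is the observation, which I would verify in the upper half-plane model with the escaping vertex placed at $\infty$, that such a near-ideal triangle of bounded area nonetheless contains two points at arbitrarily large distance along the geodesic ray toward its ideal vertex, while its thick part near the opposite side contains a fixed ball $B_0$ of radius depending only on $\beta$. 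Fixing $B_0$ so that $\int_{B_0}|f|=\eta>0$ (possible since $f\not\equiv 0$) and given an arbitrary target $y$, I would use the isometry-invariance of $\mathcal B$ to position some $T_n$ with its thick part over $B_0$ and its long axis running through $y$; for $n$ large enough, depending only on $d(B_0,y)$, one gets $B_0\cup\{y\}\subset T_n$. Since $\mu(T_n)\le\pi-\beta+1$ uniformly, this yields $M_\mathcal B f(y)\ge\eta/(\pi-\beta+1)=:c>0$ for every $y$.

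For Case (IV), the hypothesis $\bar A\cap S_3\ne\emptyset$ furnishes triangles that are simultaneously small ($s\to\pi$) and thin (one angle $\to0$): arbitrarily short, eccentric slivers of a fixed thin shape, available in every orientation by isometry-invariance. I would work inside a small ball $B(o,\rho_0)$, where the metric is a $(1+O(\rho_0^2))$ perturbation of the Euclidean one, so that these slivers approximate thin Euclidean triangles of prescribed aspect ratio and direction. On this near-Euclidean model I would carry out the Perron-tree construction underlying the Kakeya blow-up of Theorem~\ref{BLOW}: taking $E$ to be the union of the slivers $R_i$, the ``doubled'' triangle obtained by translating $R_i$ along its long axis still belongs to $\mathcal B$ and carries density of $E$ at least $1/4$ (the loss from the rectangle value $1/2$ being the triangle-to-rectangle ratio), whence $M_\mathcal B\mathbb 1_E\ge 1/4$ on the blown-up region, a set of measure $\ge C\,\mu(E)$. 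The $O(\rho_0^2)$ distortion only perturbs constants and preserves the blow-up.

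The main obstacle is Case (IV): one must check that the combinatorial Perron-tree estimates survive both the passage from rectangles to thin triangles and the hyperbolic-to-Euclidean approximation, and that the slivers supplied by $\bar A\cap S_3\ne\emptyset$ come in a sufficiently rich range of directions and thinnesses to drive the blow-up. Case (III) is the next most delicate point, where the assertion that a near-ideal triangle of bounded area reaches arbitrarily far must be quantified in the model and married correctly to the transitivity of $G$ on pointed geodesics; Cases (I) and (II), by contrast, should reduce cleanly to the volume identity and to Stromberg's theorem respectively.
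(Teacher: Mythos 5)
Your proposal is correct, and for cases (I), (III) and (IV) it follows essentially the paper's own route: (I) is the same Fubini argument once compactness of $\bar A\subset S_0$ gives uniform bounds on diameter and area; (III) is the same mechanism as Proposition \ref{THMCATCH} --- a triangle of bounded area that reaches arbitrarily far can be positioned, by isometry-invariance, to contain both a fixed small ball carrying mass of $f$ and an arbitrary point $y$ --- except that the paper works with the limiting triangle $T$ having a genuine ideal vertex (placed via the stabilizer of $x_0$) and then approximates $\int_T|f|$ by triangles of $\mathcal B$, whereas you work directly with the near-ideal triangles $T_n\in\mathcal B$ and quantify their reach; this is if anything a cleaner justification of the paper's approximation step (one small correction: the inradius of the thick part depends on the whole limit triple $(0,\alpha_2,\alpha_3)$, not only on $\beta=s(\boldsymbol{\alpha})$, but this is immaterial since that triple is fixed). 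Case (IV) is also the paper's strategy: near-Euclidean approximation in a small ball (which the paper formalizes via $\exp_{x_0}$ in Proposition \ref{C8P2}) combined with the Euclidean Kakeya blow for \emph{congruent} thin triangles (Proposition \ref{P1}, which the paper likewise quotes from the literature without proof). One caution there: since angles determine a hyperbolic triangle up to isometry, $\mathcal B$ contains exactly one congruence class per angle triple, so your ``doubled'' triangle must be read as the translated congruent copy of $R_i$ (shifted by at most half its length, whence the overlap density $(1/2)^2=1/4$), not as a triangle of twice the size --- your parenthetical about the $1/4$ shows you computed it the right way, but the phrase ``still belongs to $\mathcal B$'' is only true under that reading. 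The one genuine divergence is case (II): the paper attaches to each $T\in\mathcal B$ a comparable ball $B(T)$ of uniformly bounded radius (Claim \ref{CLAIMX}) and proves the weak $(1,1)$ bound directly by the Vitali covering lemma (Theorem \ref{THMVIT}) applied to the level set, while you prove the pointwise domination $M_{\mathcal B}f\lesssim_\delta M^cf$ (a fat triangle through $x$ of diameter $\rho\le R_0$ lies in $B(x,\rho)$ and contains a ball of radius $\kappa(\delta)\rho$, with comparable volumes at bounded scales) and then invoke Stromberg's Theorem \ref{TStrom}. Both are valid; the paper's argument is elementary and self-contained, whereas yours imports a much deeper theorem than is needed --- since your domination only involves balls of radius at most $R_0$, the local (bounded-radius) centered maximal operator suffices, and its weak $(1,1)$ bound is proved by exactly the Vitali argument the paper runs, so the two approaches are closer than they appear.
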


The proof of Theorem \ref{T:MAIN} will be done in different parts and relies on different ideas: in contrast with the work of Stromberg \cite{St}, we exclusively use geometric techniques. To begin with, if $\Bar{A} \subset S_0$, then the operator $M_\mathcal{B}$ is controlled by $$T :  f(x) \rightarrow  \frac{1}{B(x,r)} \int_{B(x,r)} |f|$$ where the radius $r$ is independent of $x$. Such an operator $T$ is bounded from $L^1$ to $L^1$ and since $M_\mathcal{B} \leq T$, the conclusion comes easily. If $\Bar{A} \subset S_1$ and $\Bar{A} \cap S_1 \neq  \emptyset$, this means that the triangles in the family $\mathcal{B}$ cannot become arbitrarily thin. Hence, one can compare each of them to ball whose radius is uniformly bounded and this will allow us to prove the desired weak-type estimate thanks to a covering argument \textit{à la} Vitali.

In the case where $\Bar{A} \cap S_2 \neq  \emptyset$, the family $\mathcal{B}$ contains triangles that tend to become partly ideal. We prove that given any pair of point $(x_0,y)$ one can always catch the point $y$ and a small ball centered at $x_0$ with partly ideal triangles and this will allow us to prove that the maximal function $M_\mathcal{B}f$ of any non zero function is uniformly bounded by below. This effect is specific to the hyperbolic plane $\mathbb{H}^2$ and cannot be observed in the Euclidean space with classic convex sets.

Finally, if $\Bar{A} \cap S_3 \neq  \emptyset$ then the family $\mathcal{B}$ contains arbitrarily thin triangles that tend to become Euclidean. We will be able to construct the desired set $E$ by considering local families of triangles: we will prove that their images by the exponential map contain enough Euclidean triangles in an arbitrary tangent space $T_{x_0}\mathbb{H}^2$ which will allow us to exploit a Kakeya-type set of the Euclidean plane.

\section{Thick triangles (I)}

We suppose that we have $\Bar{A} \subset S_0$. For any $x \in \mathbb{H}^2$, consider the family $\mathcal{B}(x)$ defined as $$ \mathcal{B}(x) := \left\{ 
 T \in \mathcal{B} : x \in T\right\}.$$ In other words, $\mathcal{B}(x)$ is composed of every triangles $T$ in $\mathcal{B}$ which contains the point $x$. We claim the following.
\begin{claim}
If $\Bar{A} \subset S_0$ then there exists $r = r(\mathcal{B}) >0$ such that for any $T \in \mathcal{B}(x)$, one has $T \subset B(x,r)$ and also $\mu\left( T \right) \simeq_\mathcal{B} \mu\left( B(x,r) \right).$
\end{claim}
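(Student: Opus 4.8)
The plan is to extract uniform geometric bounds from the hypothesis $\Bar{A} \subset S_0$ by a compactness argument, then to convert a lower bound on the angles into an upper bound on the diameter of the triangles using the hyperbolic law of cosines, and finally to read off both assertions.

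First I would note that the simplex $S$ is closed and bounded in $\mathbb{R}^3$, hence compact, so its subset $\Bar{A}$ is compact. The functions $s$ and $p$ are continuous, and by the very definition of $S_0$ we have $\pi - s(\boldsymbol{\alpha}) > 0$ and $p(\boldsymbol{\alpha}) > 0$ for every $\boldsymbol{\alpha} \in \Bar{A}$; hence these two functions attain positive minima $\delta > 0$ and $\eta > 0$ on $\Bar{A}$. The first bound gives, through the identity $\mu(T) = \pi - s(\boldsymbol{\alpha}_T)$, a uniform lower bound $\mu(T) \geq \delta$ on the area. The second bound $\alpha_1\alpha_2\alpha_3 \geq \eta$, combined with $\alpha_i \leq \pi$, forces each angle to be bounded below by $\theta_0 := \eta/\pi^2 > 0$; since the angles also sum to at most $\pi$, each of them in fact lies in the fixed compact subinterval $[\theta_0, \pi - 2\theta_0]$ of $(0,\pi)$.

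The heart of the argument is the passage from a lower bound on the angles to an upper bound on the side lengths, which is exactly where hyperbolic geometry departs from the Euclidean situation, in which the angles determine a triangle only up to scaling. I would invoke the hyperbolic law of cosines: for a triangle with angles $\alpha, \beta, \gamma$ and side $c$ opposite $\gamma$, one has
$$\cosh c = \frac{\cos\gamma + \cos\alpha\,\cos\beta}{\sin\alpha\,\sin\beta}.$$
Since every angle lies in $[\theta_0, \pi - 2\theta_0]$, the denominator is bounded below by the positive constant $\sin^2\theta_0$ while the numerator is bounded in absolute value by $2$; hence $\cosh c$, and therefore $c$, is bounded above by a constant $L = L(\mathcal{B}) < \infty$, and the same applies to all three sides. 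A geodesic triangle being convex, its diameter equals the length of its longest side, so $\mathrm{diam}(T) \leq L$ for every $T \in \mathcal{B}$. This diameter bound is the one genuine obstacle; everything surrounding it is compactness bookkeeping and elementary comparison of measures.

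It then remains to conclude. Setting $r := L$, any $T \in \mathcal{B}(x)$ contains $x$, so every point of $T$ lies within distance $\mathrm{diam}(T) \leq r$ of $x$, which gives $T \subset B(x,r)$ and in particular $\mu(T) \leq \mu(B(x,r))$. For the reverse comparison, observe that $\mu(B(x,r))$ equals a fixed constant $V(r)$ independent of $x$, by homogeneity of $\mathbb{H}^2$, whereas $\mu(T) \geq \delta$; thus $\mu(T) \geq (\delta/V(r))\,\mu(B(x,r))$. Combining the two inequalities yields $\mu(T) \simeq_\mathcal{B} \mu(B(x,r))$, which completes the proof of the claim.
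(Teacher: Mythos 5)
Your proof is correct, and in fact the paper offers no proof of this Claim at all --- it is stated and then immediately used to derive the $L^1$ bound --- so your argument supplies exactly the details the authors omitted, in line with their introductory sketch (thick triangles are uniformly comparable to balls of a fixed radius). All the key steps check out: $\Bar{A}$ is a compact subset of $S$ avoiding the zero sets of $\pi - s$ and $p$, so both functions have positive minima $\delta, \eta$ on it; the bound $p \geq \eta$ together with $s \leq \pi$ confines every angle to $[\theta_0, \pi - 2\theta_0]$ with $\theta_0 = \eta/\pi^2$; the dual hyperbolic law of cosines then bounds every side length by some $L(\mathcal{B}) < \infty$, hence also the diameter, since the diameter of a (convex) geodesic triangle in $\mathbb{H}^2$ is realized at two of its vertices; and Gauss--Bonnet gives $\mu(T) \geq \delta$, which against the $x$-independent volume $\mu(B(x,r)) = V(r)$ yields the two-sided comparison with constants depending only on $\mathcal{B}$. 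Only cosmetic touch-ups are needed: your diameter bound places $T$ in the \emph{closed} ball $\bar{B}(x,L)$, so take $r$ slightly larger than $L$ to get the open ball, and the estimate $\sin\alpha\,\sin\beta \geq \sin^2\theta_0$ uses that $\sin$ is concave on $[0,\pi]$ together with $\theta_0 \leq \pi/3$, which holds automatically because the three angles are each at least $\theta_0$ and sum to at most $\pi$.
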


With this proposition at hands, the conclusion comes easily: for any $f$, we have $$ \int_{\mathbb{H}^2} M_\mathcal{B}f(x)d\mu(x) \lesssim_\mathcal{B} \int_{\mathbb{H}^2 \times \mathbb{H}^2} |f|(y) \mathbb{1}_{ d(x,y) \leq r }d\mu(y) d\mu(x).$$ Indeed for any $x \in \mathbb{H}^2$, there exists $T \in \mathcal{B}(x)$ such that $$M_\mathcal{B}f(x) \leq \frac{2}{\mu\left( T \right)} \int_T |f|  \lesssim_\mathcal{B} \int_{B(x,r)} |f|$$ since we have $T \subset B(x,r)$ and that both sets have comparable volume. Using Fubini we obtain as expected $$ \int_{\mathbb{H}^2} M_\mathcal{B}f(x)d\mu(x) \lesssim_\mathcal{B} \|f\|_{L^1(\mu)}.$$

\section{Thick triangles (II)}

We suppose that the family $\mathcal{B}$ satisfies $\Bar{A} \subset S_1$ and $\Bar{A} \cap S_1 \neq  \emptyset$. This condition means that the eccentricity of a triangle in $\mathcal{B}$ is uniformly bounded by below but we do not have control on its volume now. The following Claim holds.

\begin{claim}\label{CLAIMX}
For any triangle $T \in \mathcal{B}$, there exists a ball $B(T)$ containing $T$ and satisfying $$\mu\left( B(T)  \right) \simeq_\mathcal{B} \mu\left( T  \right).$$
\end{claim}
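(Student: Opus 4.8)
The plan is to reduce the claim to a compactness statement about the \emph{shape} of the triangles, exploiting that under our hypothesis the angles are uniformly bounded away from $0$. First I would record this uniform bound: the set $\Bar{A}$ is a closed and bounded subset of $(\mathbb{R}_+)^3$, hence compact, and by hypothesis it is contained in the region $\{p > 0\}$; since $p$ is continuous it attains a positive minimum $p_0 > 0$ on $\Bar{A}$. As each angle is at most $\pi$, this forces every angle of every $T \in \mathcal{B}$ to satisfy $\alpha_i \geq \delta_0 := p_0/\pi^2 > 0$. Next, recalling that in $\mathbb{H}^2$ a triangle is determined up to isometry by its angle triple (the AAA rule), and that both $\mu(T)$ and the ratio we want to control are isometry invariant, I may attach to each $\boldsymbol{\alpha} \in A$ a model triangle $T(\boldsymbol{\alpha})$ and regard every quantity below as a function of $\boldsymbol{\alpha}$ alone. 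Each such triangle carries an inscribed disk of radius $r(\boldsymbol{\alpha}) > 0$ centered at the incenter $o(\boldsymbol{\alpha})$; let $R(\boldsymbol{\alpha})$ be the largest of the three distances from $o(\boldsymbol{\alpha})$ to the vertices. Since geodesic balls are convex, $T(\boldsymbol{\alpha}) \subseteq \overline{B(o,R)} =: B(T)$, while the incircle gives $B(o,r) \subseteq T$; with the disk-area formula $\mu(B(\rho)) = 4\pi\sinh^2(\rho/2)$ this yields
$$4\pi\sinh^2(r/2) \;\le\; \mu(T) \;\le\; \mu(B(T)) \;=\; 4\pi\sinh^2(R/2).$$
Consequently the claim reduces to bounding the single quantity $F(\boldsymbol{\alpha}) := \sinh(R(\boldsymbol{\alpha})/2)/\sinh(r(\boldsymbol{\alpha})/2)$ uniformly over $A$, since then $\mu(T) \le \mu(B(T)) = F^2\,4\pi\sinh^2(r/2) \le F^2\,\mu(T)$.

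It remains to show $F$ is bounded on $A$, and the two regimes behave differently. Away from the degenerate locus $\{s = \pi\}$ the bound is soft: there $\mu(T) = \pi - s(\boldsymbol{\alpha}_T)$ is bounded below, while the dual law of cosines $\cosh c = (\cos\alpha_i\cos\alpha_j + \cos\alpha_k)/(\sin\alpha_i\sin\alpha_j)$ together with $\alpha_i \in [\delta_0,\pi-\delta_0]$ bounds every side length, hence $R$, hence $F$, on this compact piece (where $r$ stays bounded below as well). The delicate regime is the approach to $\Bar{A} \cap S_1$, where $s \to \pi$ and the triangle shrinks to a point so that $r,R \to 0$ simultaneously; there I would use that a small hyperbolic triangle is, after rescaling by $1/r$, close to the Euclidean triangle with the same angles (via the exponential chart at $o$), so that $F(\boldsymbol{\alpha}) \to R_{\mathrm{eucl}}/r_{\mathrm{eucl}}$, the ratio of circumscribing to inscribed radius of that Euclidean triangle. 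The latter is a continuous function of the angle triple that stays bounded once the angles are $\geq \delta_0$ — a fat triangle has comparable in- and out-radius. Thus $F$ extends to a continuous function on the compact set $\Bar{A}$, and therefore $F \leq C_{\mathcal{B}} < \infty$, which is exactly the bound needed above.

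The main obstacle I expect is the boundary analysis of the last paragraph: one must make precise both the convergence of the rescaled shrinking triangle to its Euclidean counterpart (that $r(\boldsymbol{\alpha}),R(\boldsymbol{\alpha}) \to 0$ with $R/r$ tending to the Euclidean ratio as $s \to \pi$) and the continuous dependence of $r$ and $R$ on $\boldsymbol{\alpha}$ up to that boundary, so that the compactness argument genuinely applies. Everything else — the uniform angle bound, the two containments, and the reduction to the scalar quantity $F$ — is routine.
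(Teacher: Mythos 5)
The paper gives no proof of Claim \ref{CLAIMX} at all — it is simply asserted before the Vitali argument — so your proposal is filling a genuine omission rather than paralleling an existing argument. Your skeleton is sound and is surely what the authors had in mind: compactness of $\Bar{A}$ inside $\{p>0\}$ forces a uniform lower bound $\delta_0$ on all angles; AAA rigidity reduces everything to functions of $\boldsymbol{\alpha}$; taking $B(T)$ centered at the incenter $o$ with radius $R=\max_i d(o,A_i)$, and using convexity of hyperbolic balls together with $\mu(B(\rho))=4\pi\sinh^2(\rho/2)$, correctly reduces the claim to a uniform bound on $F=\sinh(R/2)/\sinh(r/2)$.

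The one place where your argument is not yet a proof is exactly the place you flag: the degenerate regime $s(\boldsymbol{\alpha})\to\pi$. Bilipschitz or conformal charts do not carry geodesic triangles to straight-sided Euclidean triangles, and a hyperbolic triangle never has the same angle triple as a Euclidean one, so "converges after rescaling to the Euclidean triangle with the same angles" requires a genuine comparison argument (and the continuity of $r$, $R$ up to the boundary stratum on top of it). You can bypass this entirely with one exact identity. Since $o$ lies on the bisector at each vertex $A_i$, dropping the perpendicular from $o$ to a side through $A_i$ produces a right triangle with hypotenuse $oA_i$, opposite leg $r$, and angle $\alpha_i/2$ at $A_i$; the hyperbolic sine rule for right triangles gives
$$\sinh d(o,A_i)=\frac{\sinh r}{\sin(\alpha_i/2)},\qquad\text{hence}\qquad \sinh R=\frac{\sinh r}{\sin(\alpha_{\min}/2)}\le\frac{\sinh r}{\sin(\delta_0/2)}.$$
Since $R\ge r$ implies $\cosh(r/2)\le\cosh(R/2)$, the double-angle formula yields
$$F=\frac{\sinh(R/2)}{\sinh(r/2)}=\frac{\sinh R}{\sinh r}\cdot\frac{\cosh(r/2)}{\cosh(R/2)}\le\frac{\sinh R}{\sinh r}\le\frac{1}{\sin(\delta_0/2)},$$
uniformly over $\mathcal{B}$, with no case distinction, no compactness, and no limiting argument. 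As a bonus this also proves the uniform diameter bound the paper invokes right after the claim: every hyperbolic triangle has $\mu(T)\le\pi$, so $4\pi\sinh^2(r/2)\le\pi$ bounds $r$ universally, and the identity above then bounds $R$, giving $\sup_{T\in\mathcal{B}}\mathrm{diam}(B(T))<\infty$ as needed for the Vitali step.
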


In particular, there exists a constant $R < \infty $ only depending on $\mathcal{B}$ such that $$\sup_{T \in \mathcal{B}} \text{diam}(B(T)) < R.$$ We recall now the Vitali covering Theorem which is valid in metric space.

\begin{thm}[Vitali covering Theorem]\label{THMVIT}
Given any family $\mathcal{F}$ of balls whose radius is uniformly bounded by a constant $R < \infty$, there exists a sub-family $\mathcal{F}' \subset \mathcal{F}$ satisfying the following properties: on one hand the balls of $\mathcal{F}'$ are pairwise disjoint and on the other hand, one has $$\bigcup_{B \in \mathcal{F}}B \subset   \bigcup_{B' \in \mathcal{F}'}5B'.$$ Here $5B$ stands for the ball $B(x,5r)$ if $B=B(x,r)$.
\end{thm}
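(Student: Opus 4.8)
The plan is to adapt the classical greedy selection argument underlying the Vitali lemma, organizing the balls into dyadic generations by radius since we only know the radii are uniformly bounded from above (not bounded away from $0$). First I would partition $\mathcal{F}$ according to radius: for each integer $j \ge 0$ set
$$\mathcal{F}_j := \left\{ B = B(x,r) \in \mathcal{F} : \frac{R}{2^{j+1}} < r \le \frac{R}{2^j} \right\},$$
so that $\mathcal{F} = \bigcup_{j \ge 0} \mathcal{F}_j$ because $r \le R$ for every ball of the family. Within each generation the radii are comparable, which is precisely what makes a greedy choice effective.

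Next I would construct the subfamily $\mathcal{F}'$ generation by generation. Let $\mathcal{G}_0$ be a maximal pairwise disjoint subfamily of $\mathcal{F}_0$, whose existence follows from Zorn's lemma applied to the poset of pairwise disjoint subfamilies of $\mathcal{F}_0$ ordered by inclusion. Inductively, having chosen $\mathcal{G}_0, \dots, \mathcal{G}_{j-1}$, let $\mathcal{G}_j$ be a maximal pairwise disjoint subfamily of those balls in $\mathcal{F}_j$ that are disjoint from every ball already selected in $\mathcal{G}_0 \cup \dots \cup \mathcal{G}_{j-1}$. Setting $\mathcal{F}' := \bigcup_{j \ge 0} \mathcal{G}_j$, disjointness across generations is built directly into the construction, so $\mathcal{F}'$ is pairwise disjoint, which gives the first conclusion.

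The key step is the covering inclusion. Fix an arbitrary $B = B(x,r) \in \mathcal{F}$, say $B \in \mathcal{F}_j$. By maximality of $\mathcal{G}_j$, either $B \in \mathcal{F}'$, in which case we take $B' = B$, or $B$ must meet some ball $B' = B(x',r') \in \mathcal{G}_0 \cup \dots \cup \mathcal{G}_j$ (otherwise $B$ could have been adjoined, contradicting maximality). Because $B'$ belongs to an earlier or equal generation, its radius satisfies $r' > R/2^{j+1} \ge r/2$, hence $r < 2r'$. Choosing a point $z \in B \cap B'$ and applying the triangle inequality, any $w \in B$ obeys $d(x',w) \le d(x',z) + d(z,x) + d(x,w) < r' + 2r < 5r'$, so $B \subset 5B'$. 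Taking the union over all $B \in \mathcal{F}$ yields $\bigcup_{B \in \mathcal{F}} B \subset \bigcup_{B' \in \mathcal{F}'} 5B'$, as required.

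The main obstacle here is set-theoretic rather than geometric: since $\mathcal{F}$ may be uncountable and its radii need not be bounded away from $0$, one cannot simply iterate the naive rule \emph{pick the largest remaining ball and discard those it meets}. The dyadic generational partition, together with the appeal to Zorn's lemma to extract a maximal disjoint subfamily inside each generation, is exactly what renders the greedy scheme well-defined. Once that framework is in place, the comparison $r < 2r'$ between a ball and an intersecting selected ball of earlier generation, followed by a single triangle-inequality estimate, delivers the dilation factor $5$ automatically.
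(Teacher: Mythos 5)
Your proof is correct and complete: the dyadic partition of radii, the Zorn's lemma extraction of maximal disjoint subfamilies generation by generation, the comparison $r < 2r'$ with an intersecting ball of earlier or equal generation, and the triangle-inequality estimate giving the dilation factor $5$ together constitute the standard proof of the $5r$-covering lemma in a general metric space. Note that the paper itself offers no proof of this statement; it simply recalls the Vitali covering theorem as a classical fact before applying it, so there is no argument in the paper to compare against --- your write-up supplies exactly the textbook proof that the authors implicitly invoke.
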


With Claim \ref{CLAIMX} and Theorem \ref{THMVIT} at hands, we can prove that $M_\mathcal{B}$ has weak-type $(1,1)$. We fix $f$ a function on $\mathbb{H}^2$ and $t > 0$: by definition the level set $\left\{ M_\mathcal{B}f > t \right\}$ is the union of the triangles $T$ belonging to $\mathcal{B}$ satisfying $$\int_T |f| > \mu(T)t.$$ We denote by $\mathcal{F}$ this family of triangles and we apply Claim \ref{CLAIMX} for each $T$ in $\mathcal{F}$ and then we apply Theorem \ref{THMVIT} to each balls $B(T)$. This gives us a subfamily $\mathcal{F}'$ of $\mathcal{F}$ satisfying the following inclusion $$  \left\{ M_\mathcal{B}f > t \right\} \subset \bigcup_{T' \in \mathcal{F}'} 5B(T')$$ and moreover, the balls $B(T')$ for $T' \in \mathcal{F}'$ are pairwise disjoint. Since all the balls considered have a radius uniformly bounded, we have $$\mu\left(5B(T')\right) \simeq_\mathcal{B} \mu\left(B(T')\right)$$ and so the following estimate holds $$\mu\left( \bigcup_{T' \in \mathcal{F}'} 5B(T') \right) \leq \sum_{T' \in \mathcal{F}'} \mu\left(5B(T')\right) \lesssim_{\mathcal{B}} \sum_{T' \in \mathcal{F}'}\mu\left(B(T')\right).$$ Now we also have $\mu\left(B(T') \right) \simeq_\mathcal{B}  \mu\left(T'\right)$ and so we obtain $$\sum_{T' \in \mathcal{F}'} \mu\left(B(T')\right) \lesssim_\mathcal{B} \sum_{T' \in \mathcal{F}'} \frac{1}{t} \int_{B(T')} |f|.$$ Since the balls $B'$ are pairwise disjoint we obtain as expected $$\mu\left(\left\{M_\mathcal{B}f > t \right\}\right) \lesssim_\mathcal{B} \frac{1}{t}\int |f|.$$

\section{Partly ideal triangles}

We suppose now that the family $\mathcal{B}$ satisfies $\Bar{A} \cap S_2 \neq \emptyset$. We fix an element $\boldsymbol{\alpha} \in \Bar{A} \cap S_2$ and without loss of generality we suppose that it is of the form $\boldsymbol{\alpha} = (0, \alpha_2, \alpha_3)$. We prove the following Proposition.

\begin{prp}\label{THMCATCH}
For any $x_0 \in \mathbb{H}^2$, there exists $r>0$ such that for any $y\in\mathbb{H}^2$, there is a triangle $T$ satisfying $\boldsymbol{\alpha}_T = (0, \alpha_2, \alpha_3)$, $y \in T $ and $B(x_0,r) \subset T$.
\end{prp}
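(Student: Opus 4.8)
The plan is to work in the upper half-plane model of $\mathbb{H}^2$ and to exploit the rigidity of partly ideal triangles together with the transitivity of the stabilizer of a point on the boundary $\partial\mathbb{H}^2$. Since the family of triangles with fixed angles $(0,\alpha_2,\alpha_3)$ is invariant under all isometries and $\mathbb{H}^2$ is homogeneous, I would first reduce to the case where $x_0$ is a distinguished basepoint: fixing an isometry $g$ with $g(x_0)=i$ (the point $(0,1)$ in the half-plane), it suffices to produce, for each $y$, a triangle $T'$ with $\boldsymbol{\alpha}_{T'}=(0,\alpha_2,\alpha_3)$, with $B(i,r)\subset T'$ and $g(y)\in T'$, and then to pull back by $g^{-1}$. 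Thus from now on $x_0=i$, and the radius $r$ we produce will depend only on $\alpha_2,\alpha_3$, hence only on $\mathcal{B}$.

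Next I would build one reference triangle adapted to the ideal vertex $\infty$. Placing the ideal vertex at $\infty$, the two sides issuing from it are vertical half-lines $x=a$ and $x=b$, and the base is an arc of a hyperbolic geodesic (a Euclidean semicircle centred on the real axis) meeting the verticals at the prescribed angles $\alpha_2$ and $\alpha_3$; such a configuration exists precisely because $\alpha_2+\alpha_3<\pi$. Using the isometries $z\mapsto \lambda z+c$, which fix $\infty$, I can translate and dilate this triangle so that $i$ lies in its interior, with $a<0<b$ and $i$ strictly above the base. Call the resulting triangle $T_1$; since $i$ is an interior point there is $\rho>0$, depending only on the angles, with $B(i,\rho)\subset T_1$, and I set $r:=\rho$. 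The geometric fact I would then record is a \emph{funnel containment}: because the geodesic ray from $i$ to the ideal vertex $\infty$ is exactly the upward vertical ray $\{(0,t):t\ge 1\}$, and because $T_1$ is the region lying between the verticals $x=a<0<b$ and above its base arc, this entire ray is contained in $T_1$.

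Finally, given an arbitrary $y$, I would aim the ideal vertex along the ray from $i$ towards $y$. Let $\xi\in\partial\mathbb{H}^2$ be the endpoint of the geodesic ray issuing from $i$ and passing through $y$, so that $y$ lies on the ray $[i,\xi)$. The stabilizer of $i$ in $\PSL(2,\mathbb{R})$ is a group of elliptic rotations acting transitively on $\partial\mathbb{H}^2$, so I may choose such a rotation $\phi$ with $\phi(\xi)=\infty$. Then $\phi$ fixes $i$, preserves $B(i,r)$, and carries the ray $[i,\xi)$ onto the upward vertical ray from $i$; in particular $\phi(y)$ lies on that ray and hence, by the funnel containment, $\phi(y)\in T_1$. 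Setting $T:=\phi^{-1}(T_1)$ gives a triangle with $\boldsymbol{\alpha}_T=(0,\alpha_2,\alpha_3)$ satisfying $B(i,r)\subset T$ and $y\in T$, which is the claim for $x_0=i$; pulling back through $g$ finishes the general case. I expect the only genuinely geometric step to be the funnel containment of the previous paragraph, namely verifying that the geodesic ray from an interior point to the ideal vertex never leaves the triangle; but in the normalized half-plane picture this is the elementary observation that a vertical ray starting inside the strip and above the base stays inside the strip and above the base.
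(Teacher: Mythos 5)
Your proposal is correct and follows essentially the same strategy as the paper: fix one reference triangle with angles $(0,\alpha_2,\alpha_3)$ containing a ball $B(x_0,r)$, then use a rotation in the stabilizer of $x_0$ to move $y$ into that triangle, exploiting the unboundedness of the triangle toward its ideal vertex. The only difference is presentational — the paper argues abstractly that the noncompact triangle meets the orbit circle $C(x_0,d(x_0,y))$, while you make the same point explicit in the half-plane model via the vertical ray to $\infty$.
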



\begin{figure}[h!]
\centering
\includegraphics[scale=0.8]{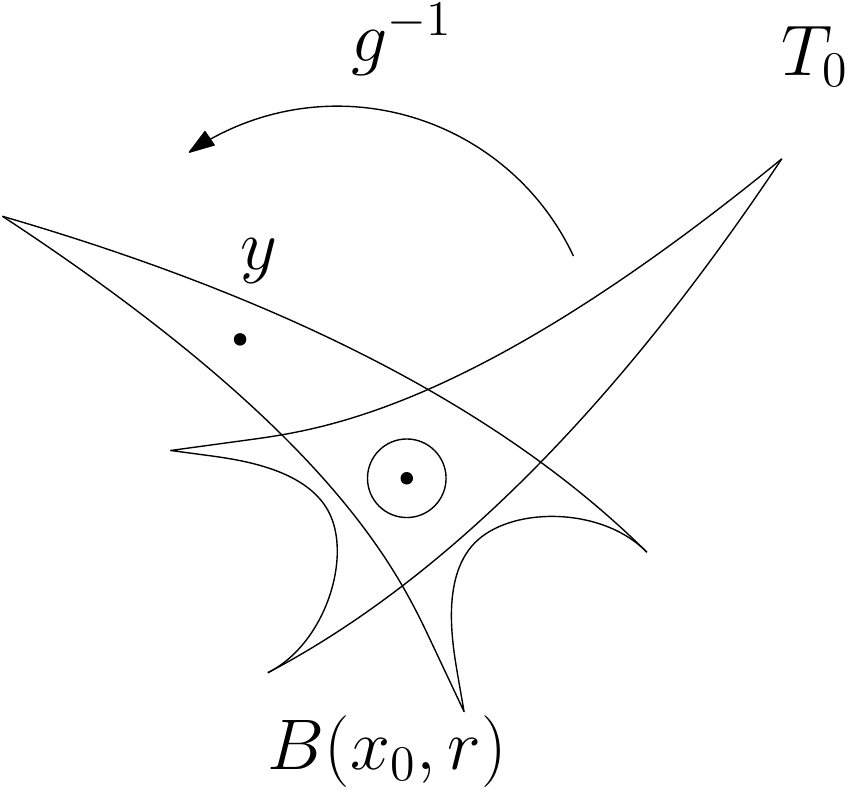}
\caption{An illustration of the proof of Theorem \ref{THMCATCH}.}
\end{figure}



\begin{figure}[h!]
\centering
\includegraphics[scale=0.8]{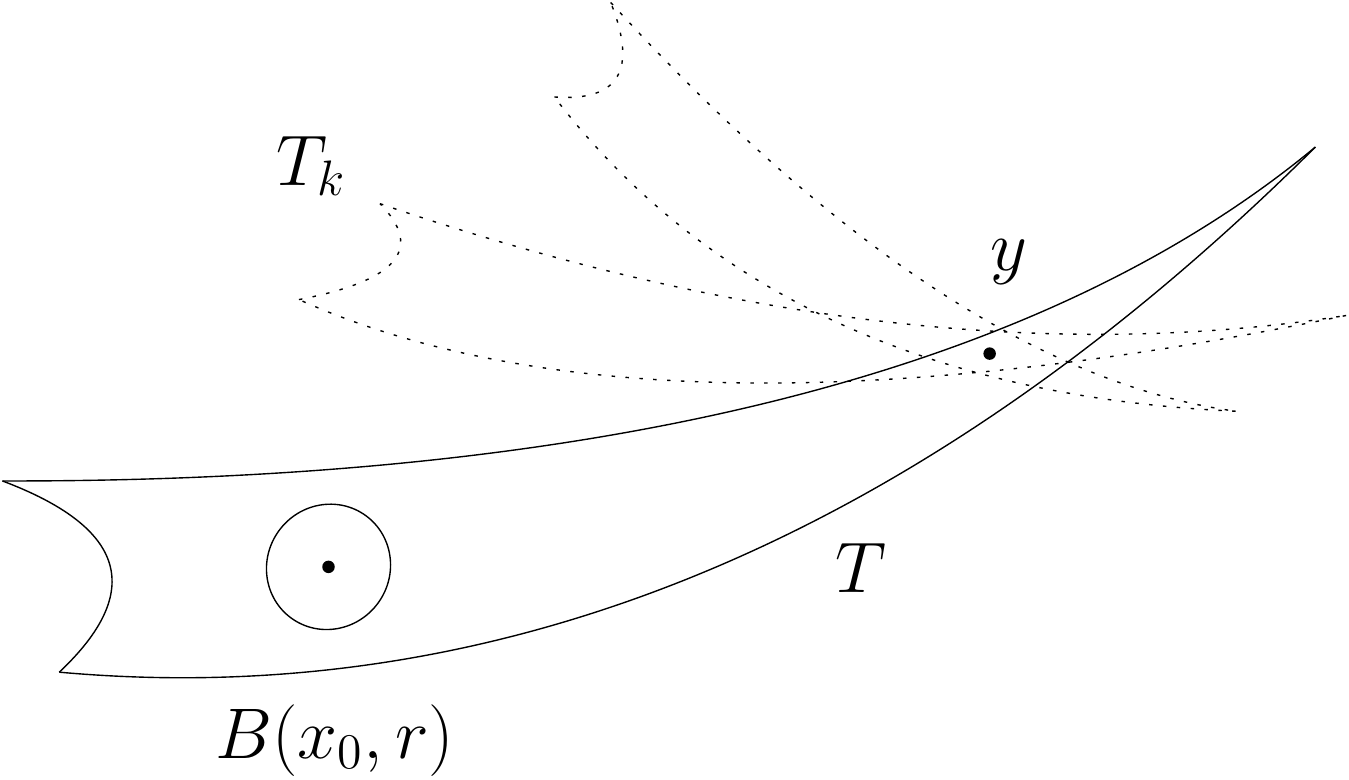}
\caption{The existence of the sequence $\{ T_k : k \geq 1 \}$ comes from the fact that $\boldsymbol{\alpha}$ belongs to $\Bar{A}$.}
\end{figure}


\begin{proof}
Let $x_0 \in \mathbb{H}^2$ and consider a triangle $T_0$ of angles $(0, \alpha_2, \alpha_3)$. As the action of $G$ on $\mathbb{H}^2$ is transitive, we may assume that $T_0$ contains $x_0$ in its interior: fix then  $r>0$ such that $T_0$ contains the ball $B(x_0,r)$.

Given any point $y\in\mathbb{H}^2$, its orbit under the stabilizer of $x_0$ is exactly the circle $C(x_0,d(x_0,y))$. As $T_0$ has at least one angle equal to $0$ by hypothesis, it is noncompact and intersect nontrivially $C(x_0,d(x_0,y))$. Hence there is $g\in G$ such that $g(x_0)=x_0$ and $g(y)\in T_0$ \textit{i.e.} $y \in T:=g^{-1}T_0$ and we have $$B(x_0,r) \subset T.$$
\end{proof}

Proposition \ref{THMCATCH} yields the conclusion as follow. Consider a non zero function $f$ on $\mathbb{H}^2$:  there exists a ball $B(x_0,r)$ such that $\int_{B(x_0,r)}|f|>0$ and by a standard compactness argument, we can choose $r$ to be arbitrarily small. We fix any point $y$ in $\mathbb{H}^2$ and we apply Theorem \ref{THMCATCH} which gives us a triangle $T$ satisfying $\boldsymbol{\alpha}_T = (0, \alpha_2, \alpha_3)$, $y \in T$ and $B(x_0,r) \subset T$. To conclude, it suffices to observe that since $\boldsymbol{\alpha} \in \Bar{A} \cap S_2$, there exists a sequence of triangles $\left\{ T_k : k \geq 1 \right\} \subset \mathcal{B}$ such that each of them contain $y$ and satistying $$\int_{T_k} |f| \rightarrow \int_{T} |f|.$$ In particular we have $$M_\mathcal{B}f(y) \geq \frac{1}{\mu(T)}\int_T |f| \geq \frac{1}{\pi} \int_{B(x_0,r)} |f|.$$ This yields the desired conclusion since the integral $\int_{B(x_0,r)} |f|$ is independant of the point $y$ (it only depends on $f$ and $\mathcal{B}$).

\section{Thin and small triangles}

We suppose now that the family $\mathcal{B}$ satisfies $\Bar{A} \cap S_3 \neq \emptyset$. We are going to prove that for any constant $C \gg 1$, there exists a bounded set $E$ in $\mathbb{H}^2$ satisfying $$ \mu\left( \left\{ M_\mathcal{B}\mathbb{1}_E \geq \frac{1}{4} \right\} \right) \geq C\mu\left(E \right).$$ In particular, this property implies that the operator $M_\mathcal{B}$ is not bounded on the space $L^p(\mathbb{H}^2)$ for any $p < \infty$. To construct the set $E$, the idea is to make a Kakeya blow with the triangles of the family $\mathcal{B}$: this is possible precisely because the family $\mathcal{B}$ contains triangles which are arbitrarily thin and almost Euclidean. To make this argument rigorous, we need to work in a tangent space $T_{x_0}\mathbb{H}^2$ and to precisely describe the image of small triangles in a neighborhood of $x_0$ by the exponential map.

We introduce the following notation: given two triangles $T$ and $T'$ (Euclidean or hyperbolic), we note $$T \sim T'$$ if there is an isometry $g$ such that $T= g(T')$. In the following, we fix any point $x_0 \in \mathbb{H}^2$ and let $\exp_{x_0}$ be the exponential map between $\mathbb{H}^2$ and $T_{x_0}\mathbb{H}^2$. We also suppose that we work in a neighborhood $B(x_0,r)$ of $x_0$ so small that we have the following estimate for any open set $U \subset B(x_0,r)$ and with $h = \frac{1}{1000}$ $$(1-h)\left|\exp_{x_0}(U) \right| \leq \mu\left(U \right) \leq (1+h)\left|\exp_{x_0}(U)\right|.$$ We consider \textit{local} families of triangles: given any $r > 0$ and a triangle $T$, we consider the following family of triangles $$\mathcal{L}_{r}(T) := \left\{ T' \in \mathcal{T} : T' \sim T, T' \subset B(x_0,r) \right\}.$$ Because we want to be able to \textit{move} our triangle $T$ in the ball $B(x_0,r)$, we assume that we have $$\frac{r}{10} > \text{diam}(T).$$ The following proposition states that in the Euclidean plane, a maximal operator defined on a local family of thin triangles can exploit a Kakeya-type set.


\begin{figure}[h!]\label{IML}
\centering
\includegraphics[scale=0.8]{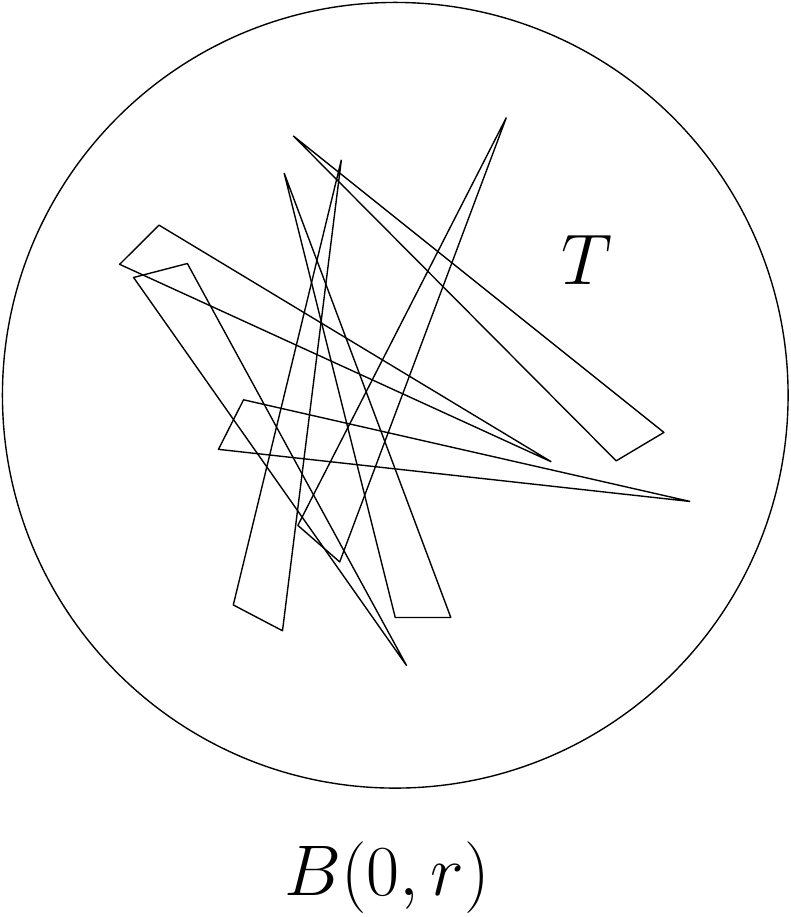}
\caption{Representation of a local family $\mathcal{L}_r(T)$ in the Euclidean plane.}
\end{figure}


\begin{prp}\label{P1}
For any large constant $C \gg 1$, there exists $0 < \epsilon \ll 1$ such that if $T_i$ is an Euclidean triangle thin enough (\textit{i.e.} satisfying $p(\boldsymbol{\alpha}_{T_i}) < \epsilon$) then any maximal operator $M_{\mathcal{L}_r(T_i)}$ defined on a non empty local family $$\mathcal{L}_r(T_i) := \left\{ T' \in \mathcal{T} : T' \sim T, T' \subset B(0,r) \right\}$$ admits a Kakeya-type set $F$ included in $B(x_0,r)$ satisfying $$ \left| \left\{ M_{\mathcal{L}_r(T_i)}\mathbb{1}_F \geq \frac{1}{4} \right\} \right| \geq C\left|F \right|.$$
\end{prp}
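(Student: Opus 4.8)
The plan is to transfer the Euclidean Kakeya blow for rectangles (Theorem \ref{BLOW}) to the local family of congruent thin triangles. The guiding principle is that a triangle $T_i$ with eccentricity $p(\boldsymbol{\alpha}_{T_i}) < \epsilon$ is, up to a multiplicative error tending to $1$ as $\epsilon \to 0$, indistinguishable from a thin rectangle (a \emph{tube}) pointing in the same direction: it contains an inscribed rectangle and is contained in a circumscribed rectangle sharing its long axis, both of area comparable to $|T_i|$ with ratios controlled by $\epsilon$ alone. I fix the target constant $C \gg 1$ first; the value of $\epsilon$, hence the required thinness of $T_i$, is chosen at the very end and depends only on $C$.

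First I would recall the construction underlying Theorem \ref{BLOW}, the Perron tree, in a form adapted to congruent tubes: for a suitable number $N$ of directions (with $N \to \infty$ as the tubes get thinner) one produces a finite family of pairwise congruent thin rectangles $R_1, \dots, R_N$, all congruent to the inscribed rectangle of $T_i$ and spread over a range of directions, such that $F := \bigcup_j R_j$ has small measure while the \emph{shifted} family obtained by translating each $R_j$ along its own long axis spreads out to cover a set of measure $\geq 2C\,|F|$. The one point where the present setting differs from Theorem \ref{BLOW} is that the family $\mathcal{L}_r(T_i)$ contains only copies of a single \emph{fixed-size} triangle, so I cannot use doubled tubes of twice the length. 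I circumvent this by translating each $R_j$ along its long axis by a fixed fraction (say one half) of its length: the shifted tube $\tilde R_j$ is then again congruent to $R_j$, it still overlaps $R_j$, hence $F$, in at least half of its area, and, because the translations are performed along $N$ distinct directions, the union $\bigcup_j \tilde R_j$ still spreads to measure $\geq 2C\,|F|$, the fractional shift costing only an absolute constant in the blow-up factor.

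With this configuration in hand I would pass from rectangles to triangles and to the maximal function. Each $\tilde R_j$ is, after replacing it by the circumscribed congruent triangle $\tilde T_j \in \mathcal{L}_r(T_i)$ pointing in the same direction, a competitor in the family $\mathcal{L}_r(T_i)$; by the triangle/rectangle comparison above, $\tilde T_j$ meets $F$ in a proportion of its area that differs from the rectangle proportion $1/2$ by at most an error controlled by $\epsilon$. Choosing $\epsilon$ small enough (depending only on $C$, through the thinness needed to realize the $N$-direction Perron tree and to keep this error below $1/4$) guarantees $|\tilde T_j \cap F| \geq \tfrac14 |\tilde T_j|$, whence $M_{\mathcal{L}_r(T_i)}\mathbb{1}_F(y) \geq \tfrac14$ for every $y \in \tilde T_j$. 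Taking $F$ itself as the Kakeya-type set, we obtain
\[
\Big|\Big\{ M_{\mathcal{L}_r(T_i)}\mathbb{1}_F \geq \tfrac14 \Big\}\Big| \;\geq\; \Big|\bigcup_j \tilde T_j\Big| \;\geq\; C\,|F|.
\]
Finally, rescaling the whole configuration so that $\text{diam}(T_i) < r/10$ lets me place $F$ and all competing triangles inside $B(0,r)$, as required.

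The step I expect to be the main obstacle is precisely the fixed-size constraint: in the classical argument one is free to average over a tube twice as long as the one defining $F$, which makes the density bound $\geq 1/2$ automatic, whereas here every averaging triangle has the same size as the tubes building $F$. Making the fractional-shift argument quantitative — verifying that shifting congruent tubes by a fixed fraction of their length both preserves a definite overlap density with $F$ and still produces the full Perron-tree spreading $\geq C\,|F|$ — together with checking that the triangle/rectangle area discrepancy stays below the $1/4$ threshold uniformly over all $N$ directions, is where the real work lies and dictates how small $\epsilon$ must be taken.
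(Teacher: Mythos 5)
The paper itself offers no proof of Proposition \ref{P1}: it is declared ``a well-known fact in the literature'' and omitted, so there is no argument of the authors to compare against line by line. Your proposal reconstructs the folklore argument, and its architecture is the right one: compare the fixed thin triangle to inscribed/circumscribed tubes (a clean factor-$2$ comparison valid for \emph{every} triangle), run the Perron-tree construction behind Theorem \ref{BLOW} with pairwise congruent tubes, and handle the fixed-size constraint by a half-length shift --- which is indeed equivalent to running the classical construction at half scale and averaging over doubled tubes, all congruent to one another, so this obstacle is less serious than you fear. Your quantifier bookkeeping is also correct: $\epsilon$ is chosen last, as a function of $C$ alone, because thinness governs the tube's aspect ratio $\delta$, hence the number $N \sim 1/\delta$ of usable directions, hence the achievable blow-up $\sim \log N$.

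Three points need repair. (i) The spreading of the half-shifted tubes does \emph{not} follow from ``the translations are performed along $N$ distinct directions'': $N$ tubes of width $1/N$ in $1/N$-separated directions passing through a common region can have pairwise overlaps summing to $\sim \log N$ times their total area. What saves the argument is positional, not directional: the Perron-tree construction can be arranged (this is the content of Fefferman's lemma) so that the full-shift reaches $TR_j$ are pairwise essentially disjoint; since the half-shifted tube $\tilde{R}_j$ contains the near half of $TR_j$, that disjointness is inherited at the cost of a factor $2$. This is a property of the construction's internals, not of Theorem \ref{BLOW} as a black box, and it is exactly the step you flag at the end as ``the real work'' --- correctly so. (ii) The triangle/rectangle discrepancy is not an $\epsilon$-dependent error: any triangle contains a rectangle, with one side on its longest side, of area exactly half its own, so the density bound is the clean chain $|\tilde{T}_j \cap F| \geq \frac{1}{2}|\tilde{R}_j| = \frac{1}{4}|\tilde{T}_j|$, meeting the threshold exactly and independently of $\epsilon$; the hypothesis $p(\boldsymbol{\alpha}_{T_i}) < \epsilon$ enters only by forcing the inscribed tube's aspect ratio to be $\lesssim \sqrt{\epsilon}$. (iii) You cannot ``rescale the whole configuration so that $\mathrm{diam}(T_i) < r/10$'': the triangle $T_i$ is fixed and $\mathcal{L}_r(T_i)$ admits only isometric copies of it. Say instead that the Perron-tree construction is scale invariant, so it is run at the scale of $T_i$, and the section's standing assumption $\mathrm{diam}(T) < r/10$ then fits $F$ and all competing triangles inside $B(0,r)$.
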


Proposition \ref{P1} is a well-known fact in the literature and we omit its proof. To conclude the proof of Theorem \ref{T:MAIN}, we need the following proposition which simply states that given a local family generated by a small triangle $T$ in $\mathbb{H}^2$, its image under the exponential maps is compatible with a local family generated by a triangle $T_i$ which is close to $T$ according to their sets of angles.

\begin{prp}\label{C8P2}
Fix an arbitrary precision $\delta >0$: there exists $0 < r,\epsilon \ll 1$ (which can be taken arbitrarily small) such that for any set of angles $\boldsymbol{\alpha}_0 \in S$ satisfying $|\pi - s(\boldsymbol{\alpha}_0) | \leq \epsilon$, there exists a set of Euclidean angles $\boldsymbol{\alpha}_1$ satisfying $\|\boldsymbol{\alpha}_0/\boldsymbol{\alpha}_1-1\|_1 \leq \delta$ and such that for any triangle $T$ included in $B(x_0,r)$ and satisfying $$\boldsymbol{\alpha}_T = \boldsymbol{\alpha}_0,$$ there exists an Euclidean triangle $\varphi(T)$ which depends on $T$ satisfying
\begin{flalign}
    \varphi(T) \subset \exp_{x_0}(T)\\
    |\varphi(T)|\geq\frac{1}{10}|T|\\
    \boldsymbol{\alpha}_{\varphi(T)} = \boldsymbol{\alpha}_1.
\end{flalign}
Finally, there exists $0 < r' \ll 1$ such that the following inclusion holds $$\emptyset \neq \mathcal{L}_{r'}(\varphi(T))  \subset \varphi(\mathcal{L}_r(T)).$$
\end{prp}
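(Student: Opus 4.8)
The plan is to work throughout in geodesic normal coordinates at $x_0$, under which $\exp_{x_0}$ is a real-analytic diffeomorphism from $B(x_0,r)$ onto a neighbourhood of the origin in $T_{x_0}\mathbb{H}^2\cong\mathbb{R}^2$ with $D\exp_{x_0}(x_0)=\mathrm{Id}$. The first task is to record a quantitative ``Euclidean dictionary''. Since the metric in these coordinates is $\delta_{ij}+O(|x-x_0|^2)$, I would check that on $B(x_0,r)$ the map $\exp_{x_0}$ distorts lengths, areas and angles by a factor $1+O(r^2)$ (the area statement being exactly the $h=\tfrac1{1000}$ estimate already granted in the excerpt) and, crucially, that the image of any geodesic segment of length $\ell$ contained in $B(x_0,r)$ is a curve whose deviation from the straight Euclidean chord joining its endpoints is at most $Cr\ell^2$ for a universal $C$. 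These three estimates are the only analytic inputs; everything after is planimetry in $\mathbb{R}^2$.

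Next I would fix the target angles and build $\varphi$. Given $\boldsymbol{\alpha}_0$ with $|\pi-s(\boldsymbol{\alpha}_0)|\le\epsilon$, set $\boldsymbol{\alpha}_1:=\tfrac{\pi}{s(\boldsymbol{\alpha}_0)}\boldsymbol{\alpha}_0$; these are genuine Euclidean angles (their sum is $\pi$), and since each coordinate ratio equals $s(\boldsymbol{\alpha}_0)/\pi$ one gets $\|\boldsymbol{\alpha}_0/\boldsymbol{\alpha}_1-1\|_1=3\,|s(\boldsymbol{\alpha}_0)-\pi|/\pi\le 3\epsilon/\pi\le\delta$ once $\epsilon\le\pi\delta/3$, with $\boldsymbol{\alpha}_1$ depending only on $\boldsymbol{\alpha}_0$ and not on the placement of $T$, as required. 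For a triangle $T\subset B(x_0,r)$ with $\boldsymbol{\alpha}_T=\boldsymbol{\alpha}_0$, write $E_T:=\exp_{x_0}(T)$; by the deviation estimate this is a curvilinear triangle whose edges lie within $Cr\ell^2$ of the straight chords on the images $w_1,w_2,w_3$ of the vertices, and whose vertex angles differ from $\boldsymbol{\alpha}_0$ by $O(r^2)$. I would take the straight triangle on $w_1,w_2,w_3$, apply the unique affine correction turning its angles into exactly $\boldsymbol{\alpha}_1$ (a perturbation of size $O(r^2+\epsilon)$), and shrink it slightly toward its incenter to absorb the edge bulge, setting $\varphi(T)$ equal to the result. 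Both $\varphi(T)\subset E_T$ and $|\varphi(T)|\ge\tfrac1{10}|T|$ (using $|T|\simeq|E_T|$ from the area comparison) then follow provided the edge deviation is small against the inradius-scale of the triangle.

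For the final inclusion $\emptyset\neq\mathcal{L}_{r'}(\varphi(T))\subset\varphi(\mathcal{L}_r(T))$, I would observe that the recipe above is equivariant in the placement: applying it to every hyperbolic copy $T'\in\mathcal{L}_r(T)$ produces Euclidean triangles all congruent to $\varphi(T)$ (same shape $\boldsymbol{\alpha}_1$ and, after fixing the shrink factor, same size). The assignment $T'\mapsto\varphi(T')$ is continuous from the connected space of placements of $T$ inside $B(x_0,r)$ to the space of placements of a fixed congruence class in $T_{x_0}\mathbb{H}^2$, and near $x_0$ it is a small perturbation of the identity because $\exp_{x_0}$ is. Choosing $r'\ll r$ so that every congruent copy of $\varphi(T)$ supported in $B(0,r')$ is hit by some placement $T'\subset B(x_0,r)$ then gives the inclusion; nonemptiness is automatic, since $T$ may be placed well inside $B(x_0,r)$.

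The delicate point, and where I expect to spend the most care, is the uniformity of the construction over \emph{thin} triangles. For $T$ of diameter $\ell\le r/10$ and smallest angle $\alpha_{\min}$, the relevant width is $\sim\ell\,\alpha_{\min}$ whereas the edge bulge is $\sim r\ell^2$, so inscribing a straight triangle forces $r\ell\lesssim\alpha_{\min}$, which is \emph{not} uniform as $\alpha_{\min}\to0$. Resolving this needs either a sharper scale-covariant form of the deviation estimate (exploiting that the bulge is governed by the distance to $x_0$ and vanishes to second order along radial directions) or restricting the smallness of $r$ to the angle class at hand; since Theorem \ref{T:MAIN}(IV) only requires the conclusion for a single $\boldsymbol{\alpha}_0\in\bar{A}\cap S_3$, the latter suffices. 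Concretely I would rescale the tangent plane by $1/\ell$, reducing to a fixed unit-scale curvilinear triangle whose edges become $C^1$-close to straight as $r\to0$, and extract the inscribed triangle with angles $\boldsymbol{\alpha}_1$ and the area lower bound there.
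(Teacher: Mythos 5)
Your proposal follows the same route as the paper's own proof, which consists, in its entirety, of the observation that $\exp_{x_0}$ is $(1+\epsilon)$-bilipschitz and quasiconformal on a small ball, hence ``uniformly close to an isometry'', so that the image of a hyperbolic triangle contains a Euclidean triangle with multiplicatively close angles. Your write-up is what a rigorous version of that sketch would have to look like: the normal-coordinate estimates, the explicit renormalization $\boldsymbol{\alpha}_1=\frac{\pi}{s(\boldsymbol{\alpha}_0)}\boldsymbol{\alpha}_0$ with the correct $\ell^1$ computation, the chord-plus-correction construction of $\varphi(T)$, and the continuity/equivariance argument for $\emptyset\neq\mathcal{L}_{r'}(\varphi(T))\subset\varphi(\mathcal{L}_r(T))$ are all ingredients the paper leaves implicit.

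However, the ``delicate point'' you flag at the end is a genuine gap, and neither of your proposed repairs closes it; it is also an unacknowledged gap in the paper's own proof, since bilipschitz and quasiconformal control alone cannot deliver the inclusion $\varphi(T)\subset\exp_{x_0}(T)$ together with the area bound $|\varphi(T)|\ge\tfrac{1}{10}|T|$ for triangles thinner than the distortion scale. Quantitatively, those two conditions force the edge bulge $\sim d\ell^2$ (where $d$ is the distance of $T$ to $x_0$ and $\ell=\mathrm{diam}(T)$) to be dominated by the width $\sim\ell\alpha_{\min}$, i.e. $d\ell\lesssim\alpha_{\min}$, exactly as you computed: a thin triangle placed near $\partial B(x_0,r)$ with its long axis transverse to the radial direction is mapped to a ``banana'' whose sagitta dwarfs its width, and such a region contains no straight triangle of comparable area, whatever its angles. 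Your rescaling repair is vacuous because the bulge-to-width ratio is scale invariant. Your other repair --- fix the angle class and shrink $r$ --- founders on the rigidity of hyperbolic triangles: $\ell$ and $\alpha_{\min}$ are determined by $\boldsymbol{\alpha}_0$ alone (there are no similarities in $\mathbb{H}^2$), and nonemptiness of the local families forces $r,r'\gtrsim\ell$, so the requirement $r'\ell\lesssim\alpha_{\min}$ (needed for the transversally oriented copies at distance $\sim r'$ from the origin, which must also be caught by $\varphi$ for the final inclusion) amounts to the intrinsic constraint $\ell^2\lesssim\alpha_{\min}$, equivalently $\pi-s(\boldsymbol{\alpha}_0)\lesssim\min_i\alpha_{0,i}^2$, which no choice of $r$ can arrange. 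What your argument (and the paper's) actually proves is the proposition with the hypothesis $|\pi-s(\boldsymbol{\alpha}_0)|\le\epsilon$ strengthened to a defect bound of the form $\pi-s(\boldsymbol{\alpha}_0)\le\epsilon\min_i\alpha_{0,i}^2$; accordingly, the application in the proof of Theorem \ref{T:MAIN}(IV) needs $\bar{A}$ to approach $S_3$ along a sequence whose angle defect is $o(\alpha_{\min}^2)$, which the bare hypothesis $\bar{A}\cap S_3\neq\emptyset$ does not supply.
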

\begin{proof}
    As it is well known, the differential at $0$ of the exponential map is the identity.
    Hence given $\epsilon>0$, there is $r>0$ sich that the restriction of the exponential map to $B(0,r)$ is
    $(1+\epsilon)$ bilipschitz and quasiconformal.
    In particular, $\exp_{x_0}$ will be uniformly close to an isometry, so the preimage of a hyperbolic triangle
    $T$ in $B(x_0,r)$ with angles $\boldsymbol{\alpha}_0$ will contain a Euclidean triangle with angles $\boldsymbol{\alpha}_1$ satisfying $\|\boldsymbol{\alpha}_1/ \boldsymbol{\alpha}_0-1\|_1 \leq \epsilon$.
    Taking $\epsilon$ small enough will give exactly the claimed properties.
    
\end{proof}


\begin{figure}[h!]
\centering
\includegraphics[scale=0.8]{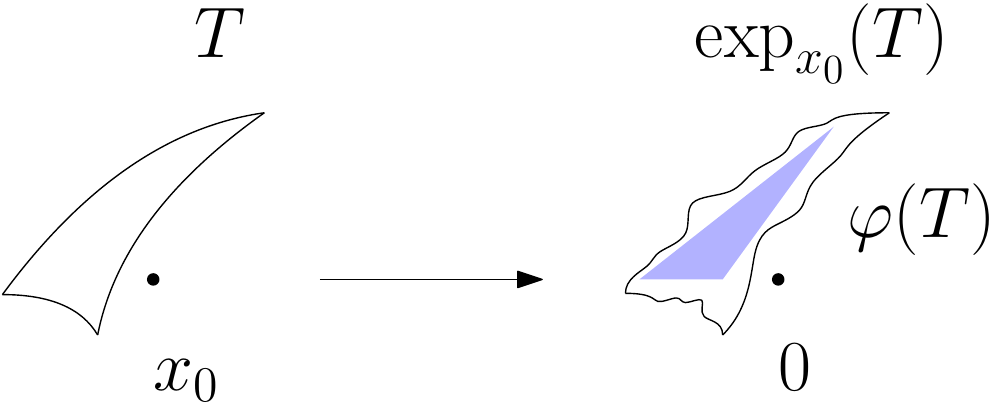}
\caption{Given a triangle $T$ close to $x_0$, we claim that its image $\exp_{x_0}(T)$ contains a triangle $\varphi(T)$ which is not too small compared to $T$ and which is also thin enough.}
\end{figure}



\begin{figure}[h!]
\centering
\includegraphics[scale=0.8]{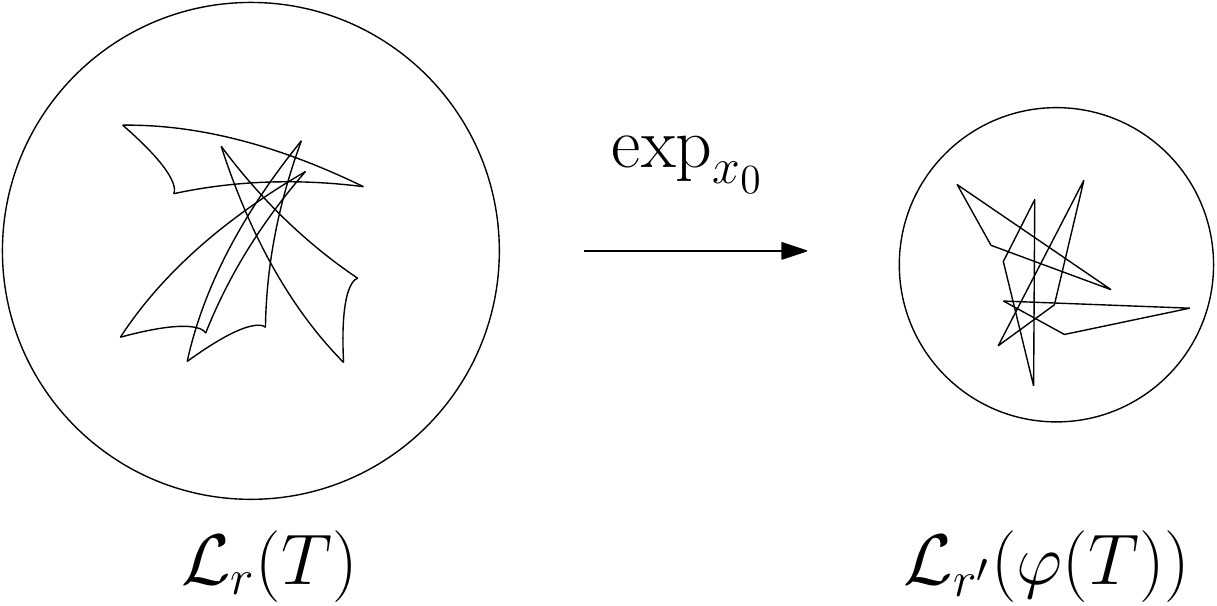}
\caption{We also claim that the image of the local family $\mathcal{L}_r(T)$ under the exponential will contain a local family $\mathcal{L}_{r'}(\varphi(T))$: this second family is now contained in the Euclidean plane and so we can use Propostion \ref{P1}.}
\end{figure}


With Propositions \ref{P1} and \ref{C8P2} at hands, we can conclude the proof of Theorem \ref{T:MAIN}. Fix a large constant $C \gg 1$ and let $\epsilon \ll 1$ as in Proposition \ref{P1}. Because we suppose that $\Bar{A} \cap S_3 \neq \emptyset$, we can choose an set of angles $\boldsymbol{\alpha}_0 \in A$ such that if $\boldsymbol{\alpha}_1$ is the set of Euclidean angles as described in Proposition \ref{C8P2}, then $$p(\boldsymbol{\alpha}_1) < \epsilon.$$ In this situation, the local family $\mathcal{L}_{r'}(\varphi(T))$ given by Proposition \ref{C8P2} satisfies the condition of Proposition \ref{P1}: hence there exists a Kakeya-type set $F$ satisfying $$ \left| \left\{ M_{\mathcal{L}_{r'}(\varphi(T))}\mathbb{1}_F \geq \frac{1}{4} \right\} \right| \geq C\left|F \right|.$$ Define now the set $E$ in $\mathbb{H}^2$ as the pre-image of $F$ by the exponential map $$ E := \exp_{x_0}^{-1}(F)$$ and let us see why we can exploit the set $E$ with the maximal operator $M_\mathcal{B}$. Fix $\varphi(T')$ in $\mathcal{L}_{r'}(\varphi(T))$ such that $$\left| \varphi(T') \cap F \right| \geq \frac{1}{4}\left|\varphi(T') \right|$$ and observe that in $\mathbb{H}^2$ we have $$\mu\left( T' \cap E \right) \simeq \left| \exp_{x_0}(T' \cap E)\right|  \geq \left| \varphi(T') \cap F\right|  \geq \frac{1}{4}\left| \varphi(T') \right| \simeq \frac{1}{40}\mu\left( T' \right).$$ Finally, the following estimate follows by inclusion $$ \mu\left(\left\{ M_\mathcal{B}\mathbb{1}_E \geq \frac{1}{40}\right\} \right)\geq  \mu\left( \bigcup_{T'} T' \right) \simeq \left| \bigcup_{T'} \varphi(T') \right| \geq C\left| E \right| \simeq C\mu\left(F \right).$$ This concludes the last part of the proof of Theorem \ref{T:MAIN}

{}

\end{document}